\documentclass[reqno,12pt]{article}
\usepackage{amsfonts, amsmath, amssymb, amsthm}
\usepackage{hyperref}
\hypersetup{ colorlinks = true, urlcolor = blue, linkcolor = blue, citecolor = red }
\usepackage{color}
\usepackage{dsfont}
\usepackage[T1]{fontenc}
\usepackage{authblk}
\usepackage{graphicx,tikz,xcolor}
\usepackage{xcolor,eucal,enumerate,mathrsfs}
\usepackage[normalem]{ulem}
\usepackage{epsfig} %,bbm
\usepackage[latin1]{inputenc}
\usepackage{psfrag}          % replace text in figures
\usepackage[ruled,vlined]{algorithm2e}
\usepackage{cancel}
\usepackage{setspace}
\usepackage{mystyle}
\usepackage{comment}
\allowdisplaybreaks

%%%%%%%%%%%%%%%%%%%%%%%%%%%%%%%%%%%%%%%%%%%%%%%%%%%%%%%%%%%%%%%%%%%%%%%

%%%%%%%%%%%%%%%%%%%%%%%%%%%%%%%%%%%%%%%%%%%%%%%%%%%%%%%%%%%%%%%%%%%%%%%%%%%%%%%%%%%

\DeclareMathOperator{\loc}{loc}

\DeclareMathOperator{\dist}{dist}

\DeclareMathOperator{\diam}{diam}
%%%%%%%%%%%%%%%%%%%%%%%%%%%%%%%%%%%%%%%%%%%%%%%%%%%%%%%%%%%%%%%%%%%%%%%%%%%%%%%%%%%

%\def\data{n,s(\Psi),\nu,L,\alpha,[a]_{0,\alpha}, \norm{\Psi(x,|Du|)}_{L^{1}(\O)}}

%%%%%%%%%%%%%%%%%%%%%%%%%%%%%%%%%%%%%%%%%%%%%%%%%%%%%%%%%%%%%%%%%%%%%%%%%%%%%%%%%%%%
\newcommand{\R}{\mathbb{R}}
\newcommand{\norm}[1]{\left\lVert#1\right\rVert}
\newcommand{\inner}[2]{\left\langle #1, #2 \right\rangle}

\newcommand{\I}{\int\limits}

\newcommand{\ssubset}{\subset\joinrel\subset}
%%%%%%%%%%%%%%%%%%%%%%%%%%%%%%%%%%%%%%%%%%%%%%%%%%%%%%%%%%%%

\title{$C^{1,\alpha}$-regularity for Schr\"odinger potentials of Shannon-entropy regularized Optimal Transport}

\author[1,2]{Sumiya Baasandorj}
\author[3]{Simone Di Marino}
\author[2,4,5,6]{Augusto Gerolin}
\affil[1]{Scuola Normale Superiore di Pisa, Piazza dei Cavalieri 7, 56126 Pisa, Italy}
\affil[2]{Department of Mathematics and Statistics, University of Ottawa}
\affil[3]{Dipartimento di Matematica, MaLGa, Universit\`a di Genova}
\affil[4]{Department of Chemistry and Biomolecular Sciences, University of Ottawa}
\affil[5]{Nexus for Quantum Technologies, University of Ottawa, Canada}
\affil[6]{Instituto de Matem\'atica Pura e Aplicada, Rio de Janeiro, Brazil}

% Custom email block below all affiliations
\affil[ ]{E-mail: \texttt{sbaasand@uottawa.ca, simone.dimarino@unige.it, agerolin@uottawa.ca}}

\begin{document}

\maketitle

\noindent \textbf{Keywords:} Caffarelli regularity, Shannon-entropy regularized Optimal Transport\vspace{-6mm}

\tableofcontents

\vspace{5mm}

\begin{abstract}
	We provide the stability of $C^{1,\alpha}$ regularity of Schr\"odinger potentials of Boltzmann-Shannon entropy regularized Optimal Transport under the classical assumption of marginals supported on bounded convex sets and whose densities are bounded above and below.
 \end{abstract}

\section{Introduction}

In this paper, we obtain Cafarelli-type regularity results for the Boltzmann-Shannon regularized Optimal Transport problem
\begin{align}
    \label{EOTP:1}
    \text{W}^2_{\varepsilon}(\mu,\nu):= \inf\limits_{\pi \in \Pi(\mu,\nu)}\I_{\R^{n}\times \R^{n}} \frac{1}{2}|x-y|^{2}\,d\pi(x,y) + 
    \varepsilon H(\pi | \mu\otimes \nu), 
\end{align}
where $\Pi(\mu,\nu)$ denotes the set of transport plans between probability measures $\mu$ and $\nu$ in $\R^n$ $(n\geq 1)$. The functional $H(\cdot | \mu\otimes \nu)$ denotes the Boltzmann-Shannon relative entropy (or Kullback-Leibler divergence) with respect to the product measure $\mu\otimes \nu$,
\begin{align*}
    %\label{EOTP:2}
    H(\pi | \mu\otimes\nu):=  
		\begin{cases} 
                \displaystyle\I \log\left(\frac{d\pi}{d(\mu\otimes\nu)} \right)\,d\pi& \mbox{if } \pi \ll \mu\otimes\nu,\\ 
		      +\infty & \mbox{if } \pi \not\ll \mu\otimes\nu. 
            \end{cases}
\end{align*}

More accurately, let $\Omega_1,\Omega_2\subset \R^n$ be bounded convex open sets in $\R^n$, and  $\mu,\nu\in\mathcal{P}(\R^n)$ be probability measures such that their supports $\mathrm{spt}(\mu)\subset \Omega_1$ and $\mathrm{spt}(\nu)\subset \Omega_2$, are contained, respectively, in $\Omega_1$ and $\Omega_2$. Additionally, throughout the paper we assume that the measures $\mu$ and $\nu$ are absolutely continuous with respect to Lebesgue measure
\begin{align}
    \label{density:1}
    \mu = f(x)\,dx
    \quad\text{and}\quad
    \nu = g(y)\,dy,
\end{align}
for some measurable density functions $f,g:\R^n\to\R$ such that there exist constants $0<\lambda\leqslant \Lambda$ such that 
\begin{align}
    \label{density:2}
    \lambda \leqslant f(x) \leqslant \Lambda\quad \forall x\in\Omega_1,\quad
    \lambda \leqslant g(y) \leqslant \Lambda\quad \forall y\in \Omega_2.
\end{align}
When the variational problem \eqref{EOTP:1} admits a solution $\pi_{\varepsilon} \in \Pi(\mu,\nu)$, called \textit{entropic optimal transport plan}, it is unique and has the following Gibbs distribution~\cite{DG20, LeoSurvey}
\begin{align}
    \label{dens:phi:1}
    \frac{d\pi_{\varepsilon}}{d(\mu\otimes\nu)}(x,y) = \exp\left(\frac{\varphi_{\varepsilon}(x) + \psi_{\varepsilon}(y)-|x-y|^{2}/2}{\varepsilon} \right),
\end{align}
for two measurable functions $\varphi_{\varepsilon} : \Omega_1\rightarrow \R$ and $\psi_{\varepsilon} : \Omega_2\rightarrow \R$, called \textit{Schr\"odinger potentials}. In particular, the pair $(\varphi_{\varepsilon}, \psi_{\varepsilon})$ maximizes the dual problem of \eqref{EOTP:1} which reads as

\begin{align*}
    %\label{DEOT:1}
    W_{\varepsilon}^{2}(\mu,\nu) = 
    \notag\
     \sup\limits_{\varphi\in C(\Omega_{1}),\psi\in C(\Omega_{2})} 
    \left\{ D_{\varepsilon}(\varphi,\psi)\right\}  + \varepsilon ,
\end{align*}
where 
\[
D_{\varepsilon}(\varphi,\psi) = \I_{\Omega_{1}}\varphi\,d\mu + \I_{\Omega_{2}}\psi\,d\nu -\varepsilon \I_{\Omega_{1}\times \Omega_{2}} \exp\left( \frac{\varphi(x)+\psi(y)-|x-y|^2/2}{\varepsilon} \right)\,d\mu(x)\,d\nu(y),
\]
is the dual functional, see again~\cite{DG20, LeoSurvey}. For abbreviation, let us denote in the rest of the paper 
\begin{align}
    \label{schrod:3}
    u_{\varepsilon}(x):= \frac{|x|^2}{2}-\varphi_{\varepsilon}(x)
    \quad\text{and}\quad
    v_{\varepsilon}(y):= \frac{|y|^2}{2}-\psi_{\varepsilon}(y).
\end{align}
Rewriting the feasibility condition $\pi_{\varepsilon}\in\Pi(\mu,\nu)$, we find that the functions $u_{\varepsilon} : \Omega_1\rightarrow \R$ and $v_{\varepsilon}: \Omega_2\rightarrow \R$ solve the so-called \textit{Schr\"odinger system} %(as marginal conditions)
\begin{align}
    \label{schrod:2}
    \left\{
    \begin{array}{lr}
        \displaystyle \exp\left(\frac{u_{\varepsilon}(x)}{\varepsilon}\right) = \I_{\Omega_2} \exp\left(\frac{\inner{x}{y}-v_{\varepsilon}(y)}{\varepsilon}\right)\,d\nu(y) 
        &\quad \mu \text{ - a.e,} \\
        \displaystyle \exp\left(\frac{v_{\varepsilon}(y)}{\varepsilon}\right) = \I_{\Omega_1} \exp\left(\frac{\inner{x}{y}-u_{\varepsilon}(x)}{\varepsilon}\right)\,d\mu(x) 
        &\quad \nu \text{ - a.e.}
    \end{array}
    \right.
\end{align}

The main result is given by the following theorem.

\begin{teo}
    \label{thm:main}
    Let $\Omega_1, \Omega_2 \subseteq \R^d$ be two convex and bounded sets and let $\mu \in \mathcal{P}(\Omega_1), \nu \in \mathcal{P}(\Omega_2)$ satisfy assumptions \eqref{density:1}-\eqref{density:2}. Then there exists $\beta\equiv \beta(n,\lambda,\Lambda)\in (0,1)$ such that for every $\varepsilon >0$ if we consider $u_{\varepsilon} : \Omega_1\rightarrow \R$ and $v_{\varepsilon} : \Omega_2 \rightarrow \R$ Schr\"odinger potentials defined in \eqref{schrod:3} that satisfy system of equations \eqref{schrod:2} we have $u_{\varepsilon}\in C^{1,\beta}_{\loc}(\Omega_{1})$. In particular for every convex subset $\Omega_{1}'\ssubset \Omega_1$, there exists a universal constant $C_{0}$ depending on $\Omega_{1}'$ and the modulus of $u_0$ (independent of $\varepsilon$) such that 
    \begin{align*}
        %\label{thm:main:1}
        \sup\limits_{x\neq y \in \Omega_{1}'} \frac{|\nabla u_{\varepsilon}(x)-\nabla u_{\varepsilon}(y)|}{|x-y|^{\beta}} \leqslant C_0
    \end{align*}
    holds whenever $\varepsilon>0$. 
\end{teo}

Theorem \ref{thm:main} corresponds to the entropic version of the celebrated   Caffarelli's $C^{1,\alpha}_{loc}$-regularity theory for Monge-Amp\`{e}re equation ~\cite{Ca2, Ca3}. Although the Schr\"odinger potentials $u_{\varepsilon} \in C^{\infty}(\Omega_{1})$ and $v_{\varepsilon} \in C^{\infty}(\Omega_{2})$ are smooth for every $\varepsilon > 0$~\cite{DG20, genevay18sample}, the known general quantitative estimates beyond the first order explode when $\varepsilon \to 0^{+}$ (see for example \cite{genevay18sample} where one can obtain $\|u_{\varepsilon}^{(k)}\|_{\infty} \leqslant \frac C{\varepsilon^{k-1}}$ where $C$ does not depend on $\varepsilon$). 

 In particular proving that there exists $\beta \in (0,1)$ such that  $u_{\varepsilon}, v_{\varepsilon} \in C^{1,\beta}_{loc}$ \emph{uniformly} in $\varepsilon$ is not at all obvious. Addressing this question is one of the main motivations in this paper.

\begin{oss} Following the proof, the explicit value of $\beta$ that we can get is $\beta= \inf \{ \frac 1{n^2}, \frac{\alpha^2}{(1+\alpha)^2} \}$, where $\alpha$ is such that $u_0 \in C^{0,\alpha}_{loc}$.
We would expect that it would be possible to take $\beta=\alpha_0$, the H\"{o}lder exponent given by Caffarelli theory (depending on $\Lambda, \lambda$ and the geometry of the domains). On one side our result is worse since in the case $\alpha \approx \alpha_0 \ll 1$ we have $ \beta \approx \alpha_0^2 \ll \alpha_0$. On the other side our result is \emph{adaptive} to the regularity of $u_0$ itself, and the dependence on $\frac{\Lambda}{\lambda}$ and the geometry appears only at the level of constants.
\vspace{2mm}
\end{oss}

\noindent
\textbf{Prior results.} There has been a recent line of works where a uniform bounds (in $\varepsilon$) is obtained for the second derivative of entropic potentials but with stronger hypotesis on the marginals: the first result in this direction is \cite{fathi2020proof} where the entropic equivalent of the \emph{Caffarelli contraction theorem} is proved (see also \cite{chewi2023entropic} for a simpler proof), and then this result is improved in various ways in the more recent work \cite{gozlan2025global}.
Despite the results are stronger in terms of smoothness of the potentials, in both these papers there is always a second-order assumption on the marginals: in the first one strongly log-concavity is required while in the second one $\rho$-convexity of the logarithm is required.

A new variational method to prove the (classical) $C^{0,\alpha}_{loc}$-regularity result by Caffarelli was proposed by Goldman and Otto in \cite{goldman2019variational} and then generalized in \cite{goldman2021quantitative, otto2021variational}. Recently this approach was used in \cite{gvalani2025} for the entropic optimal transport problem: the authors prove a H\"{o}lder estimate for the entropic map however it holds only at \emph{large} scales (with respect to $\varepsilon$).\vspace{2mm}

%has of the local $C^{1,\beta}$ norm of the functions $u_{\varepsilon}$ and $v_{\varepsilon}$ with a universal exponent $\beta \equiv \beta(n, \lambda, \Lambda) \in (0, 1)$ as $\varepsilon \rightarrow 0^{+}$ is far less obvious. 

\noindent
\textbf{Convergence to classical Optimal Transport.} When $\varepsilon\to0^+$, under the settings of \eqref{density:1}-\eqref{density:2}, the  Boltzmann-Shannon-Entropy regularized Optimal Transport in equation \eqref{EOTP:1}  $\Gamma-$converges to the $2-$Wasserstein distance between the probability measures $\mu$ and $\nu$~\cite{CarDuvPeySch17,LeoGamma},
\begin{align}
    \label{intro:W2}
    W_2^2(\mu,\nu) := \inf\limits_{\pi \in \Pi(\mu,\nu)}\I_{\Omega_{1}\times\Omega_{2}} \frac{1}{2} |x-y|^2\,d\pi(x,y).
\end{align}
Moreover, the pair of Schr\"odinger potentials $(\varphi_{\varepsilon},\psi_{\varepsilon})$ in \eqref{dens:phi:1} converges to the pair of functions $(\varphi_0,\psi_0)$ so-called Kantorovich potentials  under a suitable normalization that solve the dual problem~\cite{NW1,Fig17}
\begin{equation}\label{OTP:2}
    W_2^2(\mu,\nu)=
     \sup
    \left\{ \I_{\Omega_{1}}\varphi\,d\mu + \I_{\Omega_{2}}\psi\,d\nu :
\varphi(x) + \psi(y) \leqslant \frac{1}{2}|x-y|^2, \, \forall x\in\Omega_{1},y\in\Omega_{2} \right\}.
\end{equation}

In the rest of the paper, we denote

\begin{equation}
    \label{Kant_poten}
    u_{0}(x):= \frac{|x|^2}{2}-\varphi_0(x)\quad \text{and}\quad
    v_0(y):= \frac{|y|^2}{2}-\psi_{0}(y).\vspace{5mm}
\end{equation}
Notice that of course a uniform estimate for $u_{\varepsilon}$ would imply the same regularity result for $u_0$, so it is natural to consider as hypotesis the ones in the classical case \eqref{density:1}, \eqref{density:2}.

\vspace{10pt}

\noindent
\textbf{Main idea of the proof of Theorem \ref{thm:main}:} The main ingredients of the proof are 
\begin{itemize}
    \item[(i)] Caffarelli's $C_{loc}^{0,\alpha}$-regularity of the optimal map (or $C_{loc}^{1,\alpha}$ regularity for Kantorovich potentials) for $2$-Wasserstein distance~\eqref{intro:W2}, see Theorem \ref{thm:Caf} below;
    \item[(ii)] A \textit{local $p$-detachment} result (Lemma \ref{lem:Detach_local});
    \item[(iii)] The first-order expansion of the Boltzmann-Shannon
regularized Optimal Transport problem~\eqref{EOTP:1} around the limit $\varepsilon\to 0^+$ by Carlier, Pegon and Tamanini~\cite{CPT23} (see also \cite{CarDuvPeySch17}), i.e. 
\[
        %\label{ABF:1}
        0 \leqslant \text{W}^2_{\varepsilon}(\mu,\nu)-\text{W}^2_2(\mu,\nu) 
        \leqslant
        \frac{n}{2}\varepsilon\log\left( \frac{1}{\varepsilon} \right) + O(\varepsilon).
\]

\item[(iv)] The points (ii) and (iii) together will imply an $L^p$-closedness result between $\nabla u_{\varepsilon}$ and $ \nabla u_0$. Therefore we show $u_{\varepsilon}$ and $u_0$ are close in $L^{\infty}$, and then using the representations of $\nabla u_{\varepsilon}$ and $\nabla^2 u_{\varepsilon}$ (see Subsection \ref{subs:repres}) we will derive uniform bounds for them to conclude the main result.
\end{itemize}

\vspace{2mm}

\noindent
\textbf{Organization of the paper:} In the next section, we discuss the background of Optimal Transport and entropic regularized Optimal Transport fitting into the settings we consider in this paper. Section \ref{sec:pre} contains the technical results to be used in the proof of local estimates of potentials. Section \ref{sec:loc_est} is devoted to showing local estimates for Schr\"odinger and Kantorovich potentials. In the last section, we provide the proof of Theorem \ref{thm:main}.

% {\color{red}
% \subsection{Outlooks/Conjectures}
% In Theorem~\ref{thm:main}, we provide a $C^{1,\beta}_{loc}$ result for some $\beta$. However following the proof we have that as $\beta$ we choose 
% $$\beta = \inf \{ b, \alpha, 2b \} = \inf \{ b, \alpha \} = \inf \{ \frac ap, \alpha\} = \inf \{ \frac 1{p^2} , \alpha \} = \{ \frac 1{n^2}, \frac{\alpha^2}{(1+\alpha)^2} \}$$ and in particular for $\varepsilon \to 0^+$ we do not recover the \emph{sharp} Caffarelli regularity exponent. Would it be possible to do so?

% There are several Caffarelli regularity results:
% \begin{itemize}
% \item[(i)] if $\log f, \log g \in L^{\infty}$ then $u,v \in C^{1, \alpha}_{loc}$
% \item[(ii)] if $\log f, \log g \in C^{0,\alpha}$ then $u,v \in C^{2,\beta}$
% \item[(iii)] (for any $k\geq 1$, via bootstrap) if $\log f, \log g \in C^{k,\alpha}$ then $u,v \in C^{k+2,\beta}$ 
% \end{itemize}
% In this work we just generalize (i), which is the \emph{base} result. Could it be possible to prove also the others?
% }
\section{Optimal Transport Theory for the quadratic cost}
%\label{sec:ot}

We first recall that given two probability measures $\mu\in \mathcal{P}(\R^{n})$, $\nu\in \mathcal{P}(\R^{n})$ the Monge-Kantorovich problem with quadratic cost function is concerned with minimization problem
\eqref{intro:W2}. The existence of a minimizer $\pi_{0}$ (an optimal transport plan between measures $\mu$ and $\nu$) for problem \eqref{intro:W2} is a classical result, see for instance \cite{Vil2}. Moreover,  there exists a minimizer $\pi_{0} \in \Pi(\mu,\nu)$ if and only if  $\pi_{0}$ is concentrated on the graph of the subdifferential of a lower-semicontinuous and convex function $u_{0}$, see \cite{Bre87, Bre91, Mcc95}:
\begin{align*}
    %\label{OTP:3}
    \text{spt}(\pi_{0}) \subset \text{Graph}(\partial u_{0})
    &:= \left\{ (x,y)\in \R^{n}\times \R^{n} : y\in \partial u_{0}(x) \right\}
    \notag\\&
    =
    \left\{ (x,y)\in \R^{n}\times \R^{n} : u_{0}(x) + u_{0}^{*}(y)  = \inner{x}{y} \right\},
\end{align*}
where $u_{0}^{*}$ denotes Legendre transform given by 
\begin{equation*}
    u_{0}^{*}(y):= \sup\limits_{z\in\R^{n}}\left(\inner{y}{z}-u_{0}(z) \right), 
\end{equation*}
and the subdifferential of $u_{0}$ at $x$ is given by 
    \begin{align*}
        \partial u_{0}(x) &:= \left\{ p\in \R^{n} : u_{0}(z)\geqslant u_{0}(x) + \inner{p}{z-x} \text{ for all } z\in\R^{n}\right\}.
    \end{align*}
Moreover, under the assumption that $\mu$ and $\nu$ are absolutely continuous with respect to the Lebesgue measure, there is a pair of functions $(\varphi_{0},\psi_{0})$ maximizing the dual problem \eqref{OTP:2} that are differentiable almost everywhere on the support of $\mu$ and $\nu$, respectively, and they satisfy the following equations
\begin{equation*}
    \nabla u_{0}(x) = x - \nabla\varphi_{0}(x)\quad \text{ for }\mu \text{-a.e. } x\in\R^d,
\end{equation*}
\begin{equation*}
    \nabla u_{0}^{*}(y) = y - \nabla \psi_{0}(y)\quad \text{ for }\nu \text{-a.e. } x\in\R^d. %\vspace{5mm}
\end{equation*}
 Here $(\varphi_{0}, \psi_{0})$ is a pair of optimal Kantorovich potentials, see for instance \cite{Bre87, Bre91}.

\noindent

\subsection{$C^{1,\alpha}$-regularity in classical Optimal Transport Theory}

 Under the assumptions \eqref{density:1}-\eqref{density:2}, it is known that there is a unique optimal transport plan $\pi_{0}:= (\text{Id},T)_{\#}\mu$, where $T = \nabla u_{0}$ is the so-called Brenier's optimal transport map from $\mu$ to $\nu$. As a consequence of marginal conditions of the optimal transport $\pi_{0}$ for \eqref{intro:W2} and change of variables, Kantorovich potential $u_{0}$ satisfies the Monge-Ampere equation 
\begin{align}
    \label{eq:MA}
    g(\nabla u_{0}(x))\det\left(\nabla^2u_{0}(x) \right) = f(x)\quad\text{in}\quad \Omega_1
\end{align}
in the Alexandrov sense with the natural boundary condition  
\begin{align*}
    %\label{eq:MA:bc}
    \nabla u_{0}(\Omega_{1}) = \Omega_{2},
\end{align*}
see for instance \cite{Fig17, Gut16, Vil1} and references therein. Moreover, $u_{0}$ is the strictly convex function in $\Omega_{1}$, see \cite{Ca3, De13}, and $u_{0}$ is of class $C_{\loc}^{1,\alpha_0}$ with a universal exponent $\alpha_0 \in (0,1)$ depending only on $n$, $\lambda$ and $\Lambda$ by Caffarelli's pioneering result \cite{Ca2, Ca3}: 

\begin{teo}
    \label{thm:Caf}
    Let $u_0 : \Omega_1 \rightarrow \R$ be a strictly convex solution of \eqref{eq:MA} in the Alexandrov sense under the assumptions \eqref{density:1}-\eqref{density:2}. Then $u_{0}\in C^{1,\alpha_0}_{\loc}(\Omega_1)$ for a universal exponent $\alpha_0 \equiv \alpha_0 (n,\lambda,\Lambda) \in (0,1)$. Moreover, for every subset $\Omega_{1}'\ssubset \Omega_1$ there exists a constant $M_0$ depending on $\Omega_{1}'$ and the modulus of convexity of $u_0$ such that 
    \begin{align*}
        \sup\limits_{x\neq y \in \Omega_{1}'} \frac{|\nabla u_{0}(x)-\nabla u_{0}(y)|}{|x-y|^{\alpha_0}} \leqslant M_0.
    \end{align*}
\end{teo}

\subsection{Boltzmann-Shannon entropy
regularized Optimal Transport}

It is the fact that problem \eqref{EOTP:1} with a general bounded cost function $c : \R^{n}\times \R^{n} \rightarrow \R_{+}$ instead of the quadratic cost function 
\begin{align}
    \label{Gen:1}
    \text{OT}_{\varepsilon}(\mu,\nu):= \inf\limits_{\pi \in \Pi(\mu,\nu)}\I_{\R^{n}\times \R^{n}} c(x,y)\,d\pi(x,y) + 
    \varepsilon H(\pi | \mu\otimes \nu) 
\end{align}
admits a unique minimizer   $\pi_{\varepsilon}\in \Pi(\mu,\nu)$ as a consequence of the direct method in the calculus of variations and strict convexity of entropy. Moreover, $\pi_{\varepsilon}\in \Pi(\mu,\nu)$ is the minimizer of problem \eqref{Gen:1} under assumptions that probability measures $\mu$ and $\nu$ have positive density functions if and only if
\begin{equation*}
    %\label{ROT:1}
    \pi_{\varepsilon} = a_{\varepsilon}(x)b_{\varepsilon}(y)\exp\left(-\frac{c(x,y)}{\varepsilon}\right)\mu\otimes \nu,
\end{equation*}
see \cite{BorLewNus94, leonard2014}, where  the functions $a_{\varepsilon} : \R^{n}\rightarrow (0,\infty)$ and $  b_{\varepsilon} : \R^{n}\rightarrow (0,\infty)$ satisfy properties that $\ln a_{\varepsilon} \in L_{\mu}^{1}(\R^{n})$ and $\ln b_{\varepsilon}\in L_{\nu}^{1}(\R^{n})$, and solve the Schr\"odinger system of equations
\begin{equation*}
    %\label{ROT:2}
    \begin{cases}
        \displaystyle a_{\varepsilon}(x)\I_{\R^{n}}b_{\varepsilon}(y)\exp\left( -\frac{c(x,y)}{\varepsilon}\right)\,d\nu(y) = 1\quad \forall x\in \R^{n} \\
        \displaystyle b_{\varepsilon}(y)\I_{\R^{n}}a_{\varepsilon}(x)\exp\left( -\frac{c(x,y)}{\varepsilon}\right)\,d\mu(x) = 1 \quad \forall y\in \R^{n}.
    \end{cases}
\end{equation*}

There are many natural questions regarding the behaviour of regularized entropic optimal transport problem \eqref{Gen:1} such as the convergence of potentials, optimal plans, and optimal values with suitable quantitative convergence rates  when $\varepsilon\rightarrow 0^{+}$. This is an attracting topic for intensive research nowadays, see for instance \cite{CarDuvPeySch17,DG20,LeoGamma,Mik04}. In particular, we recall a recent interesting paper investigating the convergence of optimal values with a suitable convergence rate ~\cite[Theorem 1.1]{CPT23} which will be employed in Section \ref{sec:loc_est} below.

\begin{teo}[\cite{CPT23}]
    \label{thm:CPT}
    Let $\Omega_{X}, \Omega_{Y}\subset \R^{n}$ be convex open sets and $\mu, \nu \in \mathcal{P}(\R^{n})$ be absolutely continuous probability measures compactly supported on $\Omega_{X}$, $\Omega_{Y}$, respectively, with bounded densities. Suppose cost function $c : \Omega_{X}\times \Omega_{Y}\rightarrow \R_{+}$ is of class $C^{2}$ and infinitesimally twisted. Then
    \begin{equation*}
        \mathrm{OT}_{\varepsilon}(\mu,\nu) = \mathrm{OT}(\mu,\nu) + \frac{n}{2}\varepsilon\log\left( \frac{1}{\varepsilon} \right) + O(\varepsilon),
    \end{equation*}
    where 
    \begin{equation*}
        \mathrm{OT}(\mu,\nu) = \inf\limits_{\pi \in \Pi(\mu,\nu)}\I_{\R^{n}\times \R^{n}} c(x,y)\,d\pi(x,y).
    \end{equation*}
\end{teo}

\section{Preliminaries}\label{sec:pre}

We shall always denote by $c$ a generic positive constant, possible varying line to line, having dependencies on parameters using brackets, that is, for example, $c\equiv c(n,\lambda,\Lambda)$ means that $c$ is a positive constant depending only on $n,\lambda$ and $\Lambda$. We start this section with a technical propostion to be used later.

\begin{prop}
    \label{prop:convex}
    Let $\Omega\subset \R^{n}$ be a bounded open convex set. There exists a constant $c\equiv c(n,\Omega)$ such that 
    \begin{align*}
        %\label{prop:convex:1}
        \I_{\Omega\cap B_{r}(z)}\exp\left( -\frac{|x-z|}{r} \right)\,dx 
        \geqslant
        c|B_{r}(z)|
    \end{align*}
    holds whenever $z\in\overline{\Omega}$ and $0<r<1$.
\end{prop}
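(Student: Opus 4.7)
The idea is that on $B_r(z)$ the weight $\exp(-|x-z|/r)$ is bounded below by $e^{-1}$, so the claim reduces to the purely geometric volume estimate
\[
|\Omega \cap B_r(z)| \geq c(n,\Omega)\,|B_r(z)| \qquad \text{for all } z\in \overline{\Omega},\ 0<r<1,
\]
after which multiplying by $e^{-1}$ finishes the proof. So the heart of the matter is a uniform volume lower bound.

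To obtain it, I would exploit convexity together with a fixed interior ball. Assuming $\Omega$ has non-empty interior (otherwise the statement is vacuous), fix once and for all $x_0\in \Omega$ and $\delta>0$ with $B_\delta(x_0)\subset \Omega$, and set $D:=\diam(\Omega)$. The line-segment principle for open convex sets gives, for every $z\in \overline{\Omega}$ and every $t\in(0,1]$,
\[
(1-t)z + tB_\delta(x_0) \;=\; B_{t\delta}\bigl((1-t)z+tx_0\bigr) \;\subset\; \Omega.
\]
Moreover, each point of this ball lies within distance $t(|x_0-z|+\delta)\le t(D+\delta)$ of $z$.

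Two regimes then cover $0<r<1$. When $r\le D+\delta$, choosing $t:=r/(D+\delta)\in(0,1]$ embeds $B_{t\delta}((1-t)z+tx_0)$ inside $\Omega\cap B_r(z)$, yielding $|\Omega\cap B_r(z)|\ge (\delta/(D+\delta))^n|B_r(z)|$. When $r>D+\delta$ (which can only happen if $\diam(\Omega)<1$), we have $\overline{\Omega}\subset B_r(z)$, hence $|\Omega\cap B_r(z)|=|\Omega|\ge |B_\delta(x_0)|$, while $|B_r(z)|\le |B_1(0)|$, giving a ratio $\ge \delta^n$. Taking $c(n,\Omega):=e^{-1}\min\{(\delta/(D+\delta))^n,\,\delta^n\}$ completes the argument.

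The only subtlety is uniformity across $z\in\overline{\Omega}$: a purely \emph{local} cone argument at $z$ would produce an opening angle depending on the unknown shape of $\partial\Omega$ near $z$, but rescaling a \emph{single} interior ball $B_\delta(x_0)$ by the factor $t$ automatically yields an inscribed ball of radius $t\delta$ whose solid angle as seen from $z$ is controlled by $\delta/D$ independently of $z$. This, together with the line-segment principle, dissolves the only potential obstacle.
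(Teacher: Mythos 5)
Your proof is correct and follows essentially the same route as the paper's: both arguments reduce the claim to the volume bound $|\Omega\cap B_r(z)|\geq c\,|B_r(z)|$ via the trivial lower bound $e^{-1}$ on the weight, and both obtain that bound by contracting a fixed interior ball of $\Omega$ toward $z$ by a homothety with ratio comparable to $r$. The only (immaterial) difference is the case split -- the paper distinguishes the position of $z$ relative to the reference ball, while you distinguish whether $r$ exceeds $\diam(\Omega)+\delta$ -- and your uniform use of the line-segment principle is, if anything, slightly cleaner.
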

\begin{proof}
    First, we show that there exists a constant $c\equiv c(n,\Omega)$ such that 
    \begin{align}
        \label{convex:1}
        \frac{|\Omega\cap B_{r}(z)|}{|B_{r}(z)|} \geqslant c
    \end{align}
    for every ball $B_{r}(z)$ centered at $z\in \overline{\Omega}$ and of radius $0<r<1$. Let $B_{2R}(z_0)\subset \Omega$ be one of the largest balls contained in $\Omega$.  Let us consider a ball $B_{r}(z)$ centered at $z\in \overline{\Omega}$ and of radius $0<r<1$. Then we denote $Z_{R}$ a convex hull of $B_{R}(z_0)$ and $z$. We consider two cases depending on the position of center $z$.  

    \textbf{Case 1. $z\in \overline{\Omega}\setminus \overline{B_{R}}(z_0)$.} Let $B_{r_{z}}(x_{z})$ be the largest ball contained in $Z_{R}\cap B_{r}(z)$. $B_{r_z}(x_z)$ will be the image of $B_R(z_0)$ through an homotety centered at $z$ with ratio $\frac{r}{R+|z-z_0|}$;in particular we will have $r_z=\frac{ Rr}{R+|z-z_0|}$. In turn we get

     \begin{align*}
        \frac{|\Omega\cap B_{r}(z)|}{|B_{r}(z)|}
        \geqslant
        \frac{|B_{r_{z}}(x_{z})|}{|B_{r}(z)|}
        =
        \left( \frac{R}{R + |z-z_0|} \right)^{n}
        \geqslant 
         \left( \frac{R}{R + \diam(\Omega)} \right)^{n}
    \end{align*}

   \textbf{Case 2. $z\in \overline{B_{R}}(z_0)$. } For $0<r < R$, we have that $B_{r}(z)\subset B_{2R}(z_0)\subset \Omega$ and
 \begin{align*}
     \frac{|\Omega\cap B_{r}(z)|}{|B_{r}(z)|}
     \geqslant
     \frac{|B_{2R}(z_0)\cap B_{r}(z)|}{|B_{r}(z)|} \geqslant 1;
 \end{align*}
 for $1> r\geqslant R$, we find 
  \begin{align*}
     \frac{|\Omega\cap B_{r}(z)|}{|B_{r}(z)|}
     \geqslant
     \frac{|B_{2R}(z_0)\cap B_{r}(z)|}{|B_{r}(z)|} 
     \geqslant
     \frac{|B_{R}(0)|}{|B_{1}(0)|}
     \geqslant R^{n}.
 \end{align*}
Taking into account two cases we have discussed above, we arrive at the validity of \eqref{convex:1}. Finally, using inequality \eqref{convex:1}, we have 
\begin{align*}
    \I_{\Omega\cap B_{r}(z)} \exp\left(-\frac{|x-z|}{r}\right)\,dx
    \geqslant
    e^{-1}|\Omega\cap B_{r}(z)| \geqslant c |B_{r}(z)|
\end{align*}
for some constant $c\equiv c(n,\Omega)$. 
\end{proof}

\subsection{Local $p$-detachment}

The following lemma establishes a lower bound for $u_{0}(x) + v_{0}(y) - \langle x, y \rangle$, where $u_{0}$ and $v_{0}$ are Kantorovich potentials defined in \eqref{Kant_poten}. Specifically, it shows that the separation of $u_{0}(x) + v_{0}(y)$ from the linear function $\langle x, y \rangle$ can be controlled in terms of a power of the distance between $y$ and the gradient $\nabla u_{0}(x)$.

\begin{deff}[$p$-detachment] Let $\Omega_X, \Omega_Y \subseteq \R^n$ and $f: \Omega_X \to \mathbb{R}$, $g:\Omega_Y \to \mathbb{R}$ be two convex functions such that $f(x)+g(y) \geqslant \langle x,y\rangle$ for every $(x,y) \in \Omega_X \times \Omega_Y$. We say that $(f,g)$ satisfy a $p$-detachment on $\Omega_X \times \Omega_Y$ if there exists $C>0$ such that 
$$f(x)+g(y) - \langle x,y \rangle \geqslant C \| y- \nabla f (x)\|^{p} \qquad \forall (x,y) \in \Omega_X \times \Omega_Y$$
\end{deff}
In the following fundamental lemma we show that if $\nabla f$ is $\alpha$-H\"{o}lder then $(f,g)$ locally satisfies a $p$-detachment property with $p= \frac{ 1+\alpha}{\alpha}$. This fact in the case $\alpha=1$ is already known in the Optimal Transport literature, that is if $\nabla f$ is Lispchitz then $(f,g)$ has quadratic detachment (see \cite{ambrosio13,Vil2}): we still include the proof to highlight the differences with respect to the H\"older case.

Notice that a converse result with similar assumptions has been noted in Remark 7.10 in \cite{caffarelli2010free}, where the $p$-uniform convexity of $f^*$ is shown to imply $\alpha$-H\"{o}lder regularity of $f$ (notice that $p$-detachment for $(f,f^*)$ implies that $f^*$ is $p$-uniform convex).

A similar idea is employed in \cite{conforti2024weak, conforti2023quantitative, gozlan2025global} but they need a more global result while we are more interested in the local version: moreover we state the coercivity (or strict convexity) as a detachment property, since the Young inequality residual is what appears in the duality gap.

\begin{lemma} (Local quadratic and $p$-detachment)
\label{lem:Detach_local}
Let $\Omega_X, \Omega_Y \subseteq \mathbb{R}^{n}$ be two bounded convex open sets. Let $u:\Omega_X \to \mathbb{R}$ be a convex function such that $\nabla u (\Omega_X)=\Omega_Y$. Let $v:\Omega_Y \to \mathbb{R}$ be such that
\begin{equation}\label{eqn:uvOmegalocal} u(x) + v(y) \geqslant \langle x, y \rangle \qquad \forall (x,y) \in \Omega_X \times \Omega_Y.\end{equation}
\begin{itemize}
\item[(i)] Suppose $u \in C^{1,1}(\Omega_X)$; let $\lambda= Lip(\nabla u, \Omega_X)$ and $L=\frac 1{2 \lambda}$. Then we have
\begin{equation}
\label{eqn:local2detachement} 
u(x) + v(y) -\langle x, y \rangle \geqslant  L \| y- \nabla u (x) \|^2 \qquad \forall x \in \Omega_X , \forall y \in \Omega_Y.
\end{equation}
    \item[(ii)] Suppose $u \in C^{1, \alpha}_{loc}(\Omega_X)$ and let $p=\frac {1+\alpha}{\alpha}$ . Then for every compact $K \ssubset \Omega_X $ there exists $L\equiv L(\alpha,K)>0$ such that
\begin{equation}
\label{eqn:localpdetachement} 
u(x) + v(y) -\langle x, y \rangle \geqslant  L \| y- \nabla u (x) \|^p \qquad \forall x \in K , y \in \Omega_Y.
\end{equation}

\end{itemize}
\end{lemma}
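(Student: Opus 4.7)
\medskip

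\noindent\textbf{Proof proposal.} The plan is to reduce both parts to a Bregman-type lower bound for $u$ alone. Fix $y \in \Omega_Y$; by the hypothesis $\nabla u(\Omega_X) = \Omega_Y$ there is $x_y \in \Omega_X$ with $\nabla u(x_y) = y$. Plugging $(x_y, y)$ into \eqref{eqn:uvOmegalocal} yields $v(y) \geqslant \langle x_y, y \rangle - u(x_y)$, and hence
\[
u(x) + v(y) - \langle x, y \rangle \;\geqslant\; u(x) - u(x_y) - \langle \nabla u(x_y), x - x_y \rangle \;=:\; h(x),
\]
where $h$ is convex and nonnegative on $\Omega_X$, satisfies $h(x_y) = 0 = \min_{\Omega_X} h$, and has gradient $\nabla h(x) = \nabla u(x) - y$. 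Both parts reduce to proving $h(x) \geqslant L\,\|\nabla h(x)\|^p$ with the appropriate $p$.

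For (i) this is the Polyak--\L{}ojasiewicz inequality for $C^{1,1}$ convex functions. The $C^{1,1}$ descent lemma applied to $h$ with step $-\nabla h(x)/\lambda$ gives
\[
h\bigl(x - \tfrac{\nabla h(x)}{\lambda}\bigr) \;\leqslant\; h(x) - \tfrac{1}{2\lambda}\|\nabla h(x)\|^2,
\]
and since the left-hand side is $\geqslant 0$ (using $\min h = 0$ and, if needed, a convex $C^{1,1}$ extension of $u$ to $\R^n$ with the same Lipschitz constant), one obtains $h(x) \geqslant \tfrac{1}{2\lambda}\|\nabla h(x)\|^2$, which is the quadratic detachment with $L = 1/(2\lambda)$.

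For (ii) I would mimic this argument with the H\"older descent lemma, but carefully truncated so as to remain inside the region of H\"older regularity. Pick $K \ssubset K' \ssubset \Omega_X$, let $M \equiv M(K')$ be the $\alpha$-H\"older constant of $\nabla u$ on $K'$, and set $r := \dist(K, \partial K')$. For $x \in K$ and $s \in [0, r]$ the point $z_s := x - s\,\nabla h(x)/\|\nabla h(x)\|$ lies in $K'$, and integrating the H\"older bound along $[x, z_s]$ yields
\[
0 \;\leqslant\; h(z_s) \;\leqslant\; h(x) - s\|\nabla h(x)\| + \tfrac{M}{1+\alpha}\,s^{1+\alpha}.
\]
Maximising $\phi(s) := s\|\nabla h(x)\| - \tfrac{M}{1+\alpha}s^{1+\alpha}$ on $[0,r]$: the unconstrained optimizer $s^\ast = (\|\nabla h(x)\|/M)^{1/\alpha}$ has value $\tfrac{\alpha}{(1+\alpha)M^{1/\alpha}}\|\nabla h(x)\|^p$ (with $p = (1+\alpha)/\alpha$), which directly gives the $p$-detachment whenever $s^\ast \leqslant r$; in the opposite regime $s^\ast > r$, equivalently $\|\nabla h(x)\| > Mr^\alpha$, the function $\phi$ is still increasing on $[0, r]$, so $s = r$ gives $h(x) \geqslant \tfrac{\alpha r}{1+\alpha}\|\nabla h(x)\|$, and combining with $\|\nabla h(x)\| \leqslant \diam(\Omega_Y)$ converts this to $h(x) \geqslant \tfrac{\alpha r}{(1+\alpha)\diam(\Omega_Y)^{p-1}}\|\nabla h(x)\|^p$.

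The main obstacle is exactly this boundary issue in (ii): the ideal descent step from $x \in K$ can exit the region where H\"older regularity is available. The case-split above forces the constant $L$ in \eqref{eqn:localpdetachement} to depend on $M(K')$, $r$ and $\diam(\Omega_Y)$ in addition to $\alpha$, consistent with the stated dependence $L = L(\alpha, K)$ (all other parameters being intrinsic to the given $u$ and the geometry).
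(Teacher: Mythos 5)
Your reduction to the Bregman function $h(x)=u(x)-u(x_y)-\langle \nabla u(x_y),x-x_y\rangle$ is sound (when $v$ is replaced by the conjugate $\tilde v=u^*$ one has $h(x)=u(x)+\tilde v(y)-\langle x,y\rangle$ exactly, by the Young equality at $(x_y,y)$, so this is the same reduction the paper makes), and your part (ii) is correct: the truncated descent along $[x,z_s]\subset K'$ is the primal-side version of the paper's computation of $\sup_{x\in K_1}\{\langle y-\nabla u(x_0),x-x_0\rangle-\lambda\|x-x_0\|^{1+\alpha}\}$, and your two-case analysis (unconstrained optimizer versus step capped at $r$) matches the paper's explicit choice of the step length $r$.

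Part (i), however, has a genuine gap. The inequality $h(x)\geqslant\frac1{2\lambda}\|\nabla h(x)\|^2$ is exactly the two-point cocoercivity (interpolation) condition for $\lambda$-smooth convex functions, and on a bounded convex open set this is \emph{not} implied by convexity plus $\lambda$-Lipschitz gradient: the descent point $x-\nabla h(x)/\lambda$ can leave $\Omega_X$, and the convex $C^{1,1}(\R^n)$ extension ``with the same Lipschitz constant'' that you invoke to repair this does not exist in general --- by the Azagra--Mudarra extension theorem, the existence of such an extension with $\mathrm{Lip}(\nabla F)\leqslant\lambda$ is \emph{equivalent} to the cocoercivity condition on $\Omega_X$, i.e.\ to the very inequality you are trying to prove, so the appeal is circular. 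Nor is this a removable technicality: the remark following the lemma cites the counterexample of \cite{Drori20} showing that \eqref{eqn:local2detachement} can genuinely fail for a convex $u$ with Lipschitz gradient on a convex domain. The hypothesis that rescues the statement is the convexity of $\Omega_Y=\nabla u(\Omega_X)$, which your argument for (i) never uses. The paper's proof uses it decisively: differentiating $\nabla\tilde v(\nabla u(x))=x$ gives $D^2\tilde v\geqslant\lambda^{-1}\,\mathrm{Id}$ a.e.\ on $\Omega_Y$, and \emph{because $\Omega_Y$ is convex} this upgrades to $\lambda^{-1}$-strong convexity of $\tilde v$ on all of $\Omega_Y$, from which \eqref{eqn:local2detachement} follows by evaluating the strong-convexity inequality at $\bar y=\nabla u(x)$. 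A truncated descent as in your part (ii) would only yield a constant degenerating with $\dist(x,\partial\Omega_X)$, not the uniform $L=\frac1{2\lambda}$ claimed for every $x\in\Omega_X$; you should therefore rework (i) on the dual side, where the convexity of $\Omega_Y$ can be exploited.
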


\begin{proof} 
Let us consider the convex conjugate of $u$ restricted to $\Omega_Y$, $\tilde{v}:= (u)^*$. By definition we have $u(x)+\tilde{v}(y) \geqslant \langle x, y \rangle $ and moreover
$$\tilde{v}(y) = \sup_{ x \in \Omega_X } \left\{ \langle x, y \rangle - u(x) \right\}; $$
using \eqref{eqn:uvOmegalocal} we obtain $\tilde{v}(y) \leqslant v(y)$ on $\Omega_Y$. In particular it is sufficient to prove \eqref{eqn:localpdetachement} and \eqref{eqn:local2detachement} for $u$ and $\tilde{v}$.
\begin{itemize}
\item[(i)] Since $\nabla u$ is $\lambda$-Lipschitz, it is differentiable almost everywhere and moreover we have $D^2u \leq \lambda Id$. $\nabla \tilde{v}$ is the inverse function of $\nabla u$: differentiating the relation $\nabla \tilde{v} ( \nabla u (x))=x$  we get $D^2 \tilde{v}  ( \nabla u (x) ) \cdot D^2 u =Id$. Since this is true for every $x \in \Omega_X$ and $\nabla u(\Omega_X)=\Omega_Y$ we obtain $D^2 \tilde{v}(y) \geq \frac 1{\lambda} Id$ for almost every $y \in \Omega_Y$. Thanks to the fact that $\Omega_Y$ is convex we have that $\tilde{v}$ is a $\frac 1{\lambda}$-strongly convex function and so 
$$ \tilde{v}(y) \geqslant \tilde{v}(\bar{y}) + \langle \nabla \tilde{v} (\bar(y)), y- \bar{y}\rangle + \frac 1{2\lambda} \| y- \bar{y}\|^2.$$
Letting $\bar{y}:=\nabla u (x)$, we have $\nabla \tilde{v} (\bar{y})=x$ and using the equality in the Legendre duality $\tilde{v}(\bar{y}) + u (x) = \langle \bar{y} , x\rangle$, we find that \eqref{eqn:local2detachement} holds true for $v= \tilde{v}$.

\item[(ii)] Let us consider a compact set $K_1$ such that $K \ssubset K_1 \ssubset \Omega_X$ and denote $\lambda=[\nabla u]_{\alpha;K_1}$ which is finite thanks to the local H\"{o}lder regularity of $u$. Let $x_0 \in K$ such that $u$ is differentiable at $x_0$. Thanks to the convexity of $u$ and the H\"{o}lder condition on its gradient, for every $x \in K_1$
 \[
 u(x) \leqslant u(x_0) + \langle \nabla u (x) , x-x_0 \rangle \leqslant  u(x_0)+ \langle \nabla u (x_0) , x-x_0 \rangle + \lambda \| x-x_0\|^{1+\alpha}.
 \]
Using this estimate in the definition of $\tilde{v}$ we obtain
\begin{align*}
    \tilde{v}(y) &= \sup_{x \in \Omega_X} \{ \langle x,y\rangle - u(x)  \}  \\
    &\geqslant \sup_{x \in K_1} \{ \langle x,y\rangle - u(x)  \}  \\
    & \geqslant \sup_{x \in K_1} \{ \langle x,y\rangle - u(x_0) - \langle \nabla u (x_0) , x-x_0 \rangle - \lambda \| x-x_0\|^{1+\alpha}  \} \\
    & = \sup_{x \in K_1} \{ \langle x_0,y\rangle - u(x_0)  + \langle y-\nabla u (x_0) , x-x_0 \rangle - \lambda \| x-x_0\|^{1+\alpha}  \} \\
    & = \langle x_0,y\rangle - u(x_0) +\sup_{x \in K_1} \{ \langle y-\nabla u (x_0) , x-x_0 \rangle - \lambda \| x-x_0\|^{1+\alpha}  \} \\
     & \geqslant \langle x_0,y\rangle - u(x_0) +  C(\lambda,\dist(x_0, \partial K_1))\| y- \nabla u (x_0)\|^p.
\end{align*}
We can conclude thanks to the fact that $\dist(x_0, \partial K_1)$ is bounded from below for $x_0 \in K$ since $K \ssubset K_1$. In fact, we simply select 
$$x = x_{0} + r\|y-\nabla u(x_0)\|^{\frac{1}{\alpha}-1}(y-\nabla u(x_0))$$
in the previous display for positive constant $r$ given by
\begin{align*}
    r = \min\left\{1/2\lambda^{-\frac{1}{\alpha}}, 1/2\dist(K,\partial K_{1})(\diam(\Omega_{Y}))^{-\frac{1}{\alpha}}\right\}.
\end{align*}
\end{itemize}
\end{proof}

\begin{oss} In the optimization literature it is folklore that H\"older continuity of the gradient of a convex function defined on the whole space is actually equivalent to a global detachment property for the convex conjugate on the whole space. We provide a self-contaited proof of this fact.

\textbf{(i) $f \in C^{1,\alpha}$ implies $(f,f^*)$ has $ \frac {1+\alpha}{\alpha}$-detachment}. Suppose $f \in C^1(\R^n)$ and $\nabla f$ is $\alpha$-H\"{o}lder continuous with constant $C= \frac{ \lambda^{1+\alpha}}{1+\alpha}$ and denote $p = \frac{ 1+\alpha}{\alpha}$. Then
\begin{align*}
    f^*(y) &= \sup_{x \in \R^n} \{ \langle x,y\rangle - f(x)  \}  \\
    & \geqslant \sup_{x \in \R^n} \{ \langle x,y\rangle - f(x_0) - \langle \nabla f(x_0) , x-x_0 \rangle - C \| x-x_0\|^{1+\alpha}  \} \\
    & = \sup_{x \in K_1} \{ \langle x_0,y\rangle -f(x_0)  + \langle y-\nabla f(x_0) , x-x_0 \rangle - C \| x-x_0\|^{1+\alpha}  \} \\
    & = \langle x_0,y\rangle - u(x_0) +\sup_{x \in \R^n} \{ \langle y-\nabla f (x_0) , x-x_0 \rangle - \frac{\lambda^{1+\alpha}}{1+\alpha} \| x-x_0\|^{1+\alpha}  \} \\
     & = \langle x_0,y\rangle - f(x_0) + \frac{ 1}{p\lambda^{p}}\| y- \nabla f (x_0)\|^p,
\end{align*}

\textbf{(ii) $(f,f^*)$ has $\frac{1+\alpha}{\alpha}$-detachment implies $f \in C^{1,\alpha}$.} Let $p=\frac{1+\alpha}{\alpha}$; suppose that
$$ f^*(y)+f(x) - \langle x,y\rangle  \geqslant \frac 1{p \lambda^p} \| y-q\|^p \qquad  \forall y \in \R^n, \forall q \in \partial f (x).$$
Considering $y=q' \in \partial f (x')$, using the Young identity $f^*(q')+f(x')= \langle q',x'\rangle$ and the subdifferential inequality $f(x') \geqslant f(x) + \langle p, x'-x\rangle$ we get
$$  \langle x-x', q'-q \rangle  \geqslant \frac 1{p \lambda^p} \| q'-q\|^p.$$
Notice that this inequality is precisely $p$-convexity for $\nabla f^*$. By Cauchy-Schwarz now we conclude $\|q'-q\| \leqslant p^{\alpha}\lambda^{1+\alpha} \|x'-x\|^{\alpha}$, so $f \in C^{1, \alpha}(\R^n)$, with the constant of H\"{o}lder continuity being $\bar{C}= p^{\alpha} \lambda^{1+\alpha}$.

Even though this global analysis holds true (the constants can be also improved, at least in the Lipschitz case), a complete \emph{local} theory for this equivalence seems to lack, even in the Lipschitz case. It has been conjectured that given $u : \Omega_X \to \mathbb{R}$ a convex function then $\nabla u$ is $L$-Lipschitz if and only if 
$$u(x)+v(y)-\langle x,y \rangle \geqslant \frac 1{2L} \| y- \nabla u(x)\|^2,$$
where $v$ is the convex conjugate of $u$. However, the Lipschitz assumption alone on $\nabla u$ is not enough to guarantee this detachment: see a counterexample in \cite{Drori20}. This is not in contradiction with Lemma~\ref{lem:Detach_local} as we assume also $\nabla u ( \Omega_X)$ to be convex while this is not true in the counter-example.

%A notable case (list literature in OT) in which a global detachment holds when $\nabla u$ is a $L$-Lipschitz map AND $\Omega_Y=\nabla u (\Omega_X)$ is convex. In fact, we have $\nabla u \circ \nabla v (y)= y$ and so $D^2u (\nabla v (y)) \cdot D^2v = Id$, thus since $D^2u \leqslant L \cdot Id$, we have $D^2v \geq \frac 1{2L} Id$. Given the convexity of $\Omega_Y$ we deduce the validity of the detachment.
It would be interesting to know whether a uniform H\"{o}lder condition on $\nabla u$ and convexity of $\nabla u (\Omega_X)$ could imply a $p$-detachment with a uniform constant.
\end{oss}

\subsection{Relations and representation formulas}
\label{subs:repres}

We now want to exploit the characterization of $u_{\varepsilon}$ and $v_{\varepsilon}$ given in \eqref{schrod:2} to deduce representation formulas for their gradient and Hessian. In fact taking the derivative in $x$ in the first equation of \eqref{schrod:2} and in $y$ in the second equation of \eqref{schrod:2}, respectively, we get
\begin{equation}
\label{eq:1st:1} 
\nabla u_{\varepsilon}(x)  = \I_{\Omega_2} y \cdot  \exp\left(\frac{\inner{x}{y}-v_{\varepsilon}(y)-u_{\varepsilon}(x)}{\varepsilon}\right)\,d\nu(y).
\end{equation}
and 
\begin{equation}
\label{eq:1st:2} 
\nabla v_{\varepsilon}(y)  = \I_{\Omega_1} y \cdot  \exp\left(\frac{\inner{x}{y}-v_{\varepsilon}(y)-u_{\varepsilon}(x)}{\varepsilon}\right)\,d\mu(x).
\end{equation}

Taking another derivative we get
\begin{align} \nabla^2 u_{\varepsilon}(x) &=\frac 1{\varepsilon} \I_{\Omega_2} y  \otimes (y- \nabla u_{\varepsilon}(x)) \exp \left(\frac{\inner{x}{y}-v_{\varepsilon}(y)-u_{\varepsilon}(x)}{\varepsilon}\right)\,d\nu(y) \notag\\
&  \label{eq:2nd:1}  = \frac 1{\varepsilon} \I_{\Omega_2} (y- \nabla u_{\varepsilon}(x))  \otimes (y- \nabla u_{\varepsilon}(x)) \exp \left(\frac{\inner{x}{y}-v_{\varepsilon}(y)-u_{\varepsilon}(x)}{\varepsilon}\right)\,d\nu(y).
\end{align}
and 
\begin{align} \nabla^2 v_{\varepsilon}(y) &=\frac 1{\varepsilon} \I_{\Omega_1} x  \otimes (x- \nabla v_{\varepsilon}(y)) \exp \left(\frac{\inner{x}{y}-v_{\varepsilon}(y)-u_{\varepsilon}(x)}{\varepsilon}\right)\,d\mu(x) \notag\\
&  \label{eq:2nd:2}  = \frac 1{\varepsilon} \I_{\Omega_1} (x- \nabla v_{\varepsilon}(y))  \otimes (x- \nabla v_{\varepsilon}(y)) \exp \left(\frac{\inner{x}{y}-v_{\varepsilon}(y)-u_{\varepsilon}(x)}{\varepsilon}\right)\,d\mu(x).
\end{align}

Another ingredient  which will be useful in the next section is the relation between the non-optimality of a plan and the detachment. In particular given any plan $\pi \in \Pi(\mu, \nu)$ we have
\begin{align} \frac 12 \I_{\Omega_1 \times \Omega_2} \|x-y\|^2 \,d\pi - W_2^2(\mu, \nu) &=  \I_{\Omega_1 \times \Omega_2} \left(\frac 12 \|x-y\|^2 - \varphi_0(x) - \psi_0(y) \,\right) d \pi \notag\\
& \label{eqn:detachment} = \I_{\Omega_1 \times \Omega_2} (u_0(x)+v_0(y) - \langle x, y \rangle ) \, d \pi,
\end{align}
where we have used \eqref{OTP:2}-\eqref{Kant_poten}.

% \section{Case of $C^{1,1}$ potential}

% If we suppose that $u_0 \in C^{1,1}(\Omega_X)$ we will show that $u_{\varepsilon} \in C^{1,1}(\Omega_X)$ unifromly in $\varepsilon$.

% \begin{lemma} Let us suppose that $\mu \in \mathcal{P}(\Omega_X)$ and $\nu \in \mathcal{P}(\Omega_Y)$ are measures such that $\nabla u_0 \in {\rm Lip}(\Omega_X)$. Let $L= Lip(\nabla u_0 , \Omega_X)$. Then $\nabla u_{\varepsilon}$ is $L$-Lipschitz as well.
% \end{lemma}

% \begin{proof} Using the representation formula \eqref{eqn:repr2nd}  for the second derivative of $D^2u_{\varepsilon}$ we can estimate  $0 \leqslant D^2u(x)  \leqslant \lambda_{\varepsilon}(x) \cdot Id$ where
% $$\lambda_{\varepsilon} (x) = \frac 1{\varepsilon} \I_{\Omega_2} \|y- \nabla u_{\varepsilon}(x)\|^2 \exp \left(\frac{\inner{x}{y}-v_{\varepsilon}(y)-u_{\varepsilon}(x)}{\varepsilon}\right)\,d\nu(y).$$
% Notice that we can interpret the representation formula \eqref{eqn:repr1st} as $\nabla u_{\varepsilon} (x)= \mathbb{E}_{\pi_{\varepsilon}}( y | x)$. In particular we have $\lambda_{\varepsilon}(x)= \mathbb{E}_{\pi_{\varepsilon}} ( \| y- \nabla u_{\varepsilon}(x) \|^2 | x ) = Var_{\pi_{\varepsilon}}(y)$ and a classical decomposition shows
% $$ \mathbb{E}_{\pi_{\varepsilon}} ( \| y- \nabla u_{0}(x) \|^2 | x ) = \mathbb{E}_{\pi_{\varepsilon}} ( \| y- \nabla u_{\varepsilon}(x) \|^2 | x ) + \| \nabla u_0(x) - \nabla u_{\varepsilon}(x) \|^2. $$

% In particular we get 
% $$\lambda_{\varepsilon}(x) \leqslant $$

% \end{proof}

\section{Local estimates of potentials}\label{sec:loc_est}

Throughout this section whatever we prove below will be considered under the assumptions \eqref{density:1}-\eqref{density:2}.
First we discuss the local convergence of the gradient of Schr\"odinger potentials $\{u_{\varepsilon}\}_{\varepsilon>0}$ given in \eqref{schrod:2} to Kantorovich potential $u_{0}$ in \eqref{Kant_poten} with a proper rate of convergence in $L^{p}_{\mu}(\Omega_{1})$ for every $0<p<\infty$ as $\varepsilon\rightarrow 0^{+}$.

\begin{comment}
  \textcolor{blue}{Notice that we need to be consistent of the notation. We are using $\nabla$ and not $D$. The same we call Schr\"odinger potentials the $u_{\varepsilon},v_{\varepsilon}$ not the $\psi_{\varepsilon},\phi_{\varepsilon})$}.  
\end{comment}

\begin{lemma}
    \label{lem:UE}
    Let $u_{\varepsilon} : \Omega_1\rightarrow \R$ and $v_{\varepsilon} : \Omega_2\rightarrow \R$ be Schr\"odinger potentials satisfying \eqref{schrod:2} and strictly convex functions $u_0 : \Omega_{1}\rightarrow \R$, $v_{0} : \Omega_{2}\rightarrow \R$ be Kantorovich potentials given in \eqref{Kant_poten}. Then there exists a constant $p_{0}\equiv p_{0}(n,\lambda,\Lambda)$ such for for every $p\geq p_{0}$ and every compact subset $\Omega_{1}'\Subset \Omega_{1}$ there is a universal constant $c \equiv c(n,\lambda,\Lambda,p,\Omega_{1}',\Omega_{1})$ such that 
    \begin{equation}
        \label{lem:UE:1}
        \I_{\Omega_{1}'} |\nabla u_{\varepsilon}(x)-\nabla u_{0}(x)|^{p}\,d\mu(x)
        \leqslant
        c\varepsilon\log\left( \frac{1}{\varepsilon} \right) + O(\varepsilon)
    \end{equation}
\end{lemma}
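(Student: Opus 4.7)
The plan is to assemble the three ingredients advertised in the introduction: Caffarelli's $C^{1,\alpha_0}$-regularity of $u_0$ (Theorem~\ref{thm:Caf}), the local $p$-detachment of Lemma~\ref{lem:Detach_local}(ii), and the sharp first-order expansion of $W_\varepsilon^2$ around $\varepsilon=0$ supplied by Theorem~\ref{thm:CPT}. The first step is to feed Caffarelli's regularity into Lemma~\ref{lem:Detach_local}(ii), applied to $(u_0,v_0)$ on the compact set $\Omega_1'\ssubset \Omega_1$. With $p_0 := (1+\alpha_0)/\alpha_0$, this produces a constant $L > 0$, depending only on $n,\lambda,\Lambda,\Omega_1',\Omega_1$, such that
\[u_0(x)+v_0(y)-\langle x,y\rangle \;\geqslant\; L\,\|y-\nabla u_0(x)\|^{p_0} \qquad \forall\,(x,y)\in \Omega_1'\times\Omega_2.\]

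Next, I exploit the barycenter representation \eqref{eq:1st:1}, which reads $\nabla u_\varepsilon(x) = \int_{\Omega_2} y\, d\pi_\varepsilon^x(y)$, where $\pi_\varepsilon^x$ is the disintegration of the entropic plan $\pi_\varepsilon$ with respect to its first marginal $\mu$. Jensen's inequality applied to the convex function $z \mapsto |z-\nabla u_0(x)|^{p_0}$ then yields
\[\int_{\Omega_1'} |\nabla u_\varepsilon - \nabla u_0|^{p_0}\, d\mu \;\leqslant\; \int_{\Omega_1'\times\Omega_2} |y-\nabla u_0(x)|^{p_0}\, d\pi_\varepsilon.\]
Combining with the detachment and then enlarging the integration domain to $\Omega_1\times\Omega_2$ (which is free, since the integrand $u_0+v_0-\langle x,y\rangle$ is pointwise non-negative, being the Young--Fenchel residual of an optimal Kantorovich pair), I control the right-hand side by $L^{-1}\int_{\Omega_1\times\Omega_2}(u_0+v_0-\langle x,y\rangle)\,d\pi_\varepsilon$, which by identity \eqref{eqn:detachment} equals $L^{-1}\bigl(\tfrac12\int |x-y|^2\,d\pi_\varepsilon - W_2^2(\mu,\nu)\bigr)$.

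The third step bounds this excess transport cost via Theorem~\ref{thm:CPT}. Non-negativity of the relative entropy $H(\pi_\varepsilon\,|\,\mu\otimes\nu)$ between probability measures gives $\tfrac12\int |x-y|^2\, d\pi_\varepsilon \leqslant W_\varepsilon^2(\mu,\nu)$, so
\[\tfrac12\int |x-y|^2\,d\pi_\varepsilon - W_2^2(\mu,\nu) \;\leqslant\; W_\varepsilon^2(\mu,\nu) - W_2^2(\mu,\nu) \;\leqslant\; \tfrac{n}{2}\varepsilon \log\tfrac{1}{\varepsilon} + O(\varepsilon),\]
which proves \eqref{lem:UE:1} for the specific exponent $p=p_0$. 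To cover all $p\geqslant p_0$, I interpolate with the elementary $L^\infty$-bound $|\nabla u_\varepsilon - \nabla u_0|\leqslant \diam(\Omega_2)$, valid because $\nabla u_0(\Omega_1)=\Omega_2$ by Brenier's theorem and, by the same barycenter representation, $\nabla u_\varepsilon(x)\in \Omega_2$ (convex) for every $x$.

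The main subtlety is the mismatch between the strictly \emph{local} nature of the $p$-detachment (available only on $\Omega_1'\times\Omega_2$, because the H\"older control on $\nabla u_0$ is only local) and the global support of the entropic plan $\pi_\varepsilon$. What closes the argument is precisely the pointwise non-negativity of $u_0+v_0-\langle x,y\rangle$: it allows restricting to $\Omega_1'\times\Omega_2$ on the left of the detachment while simultaneously enlarging to $\Omega_1\times\Omega_2$ on the right, so that \eqref{eqn:detachment} may be applied globally while the detachment is only used where it is valid. Every constant produced along the chain is manifestly $\varepsilon$-independent, yielding the uniform estimate.
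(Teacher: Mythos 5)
Your proof is correct and follows essentially the same route as the paper's: Caffarelli regularity feeding the local $p$-detachment of Lemma~\ref{lem:Detach_local}(ii), the Jensen/barycenter step to pass from $\nabla u_\varepsilon$ to the plan $\pi_\varepsilon$, enlargement of the domain via non-negativity of the duality gap, identity \eqref{eqn:detachment}, non-negativity of the entropy, and Theorem~\ref{thm:CPT}. The only (harmless) divergence is cosmetic: you extend to $p\geqslant p_0$ by interpolating with the trivial bound $|\nabla u_\varepsilon-\nabla u_0|\leqslant \diam(\Omega_2)$, whereas the paper instead lowers the H\"older exponent $\alpha$ to raise $p=(1+\alpha)/\alpha$; both work.
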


\begin{proof}
   Note that we can apply Lemma \ref{lem:Detach_local} to $u_{0}$ since $u_{0}\in C^{1,\alpha}_{\loc}(\Omega_{1})$ for an exponent $\alpha\equiv \alpha(n,\lambda,\Lambda)\in (0,1)$ by Theorem \ref{thm:Caf}. In turn, for $p =  \frac{1+\alpha}{\alpha}$, using the duality of optimal transport and applying Lemma \ref{lem:Detach_local} to $u_{0}$, we see 
\begin{align*}
    %\label{UE:1:1}
    \I_{\Omega_{1}'} |\nabla u_{\varepsilon}(x)-\nabla u_{0}(x)|^{p}\,d\mu(x)
    &\leqslant
    \I_{\Omega_{1}} |\nabla u_{\varepsilon}(x)-\nabla u_{0}(x)|^{p}\chi_{\Omega_{1}'}(x)\,d\mu(x)
    \\&
    \leqslant
    \I_{\Omega_{1}\times \Omega_{2}} |\nabla u_{0}(x)-y|^{p}\chi_{\{ \Omega_{1}'\times\Omega_{2}\}}(x,y)\,d\pi_{\varepsilon}(x,y)
    \\&
    \leqslant
    \frac 1L \I_{\Omega_{1}\times \Omega_{2}} \left( u_{0}(x) + v_{0}(y)-\inner{x}{y} \right)\chi_{\{ \Omega_{1}'\times\Omega_{2}\}}(x,y)\,d\pi_{\varepsilon}(x,y)
    \\&
    \leqslant
    \frac 1L \I_{\Omega_{1}\times \Omega_{2}} \left( u_{0}(x) + v_{0}(y)-\inner{x}{y} \right)\,d\pi_{\varepsilon}(x,y)
    \\&
    \leqslant
    \frac 1L\left[ W_{\varepsilon}^{2}(\mu,\nu)-W_{2}^{2}(\mu,\nu)\right]
    \leqslant
    c\varepsilon\log\left(\frac{1}{\varepsilon}\right) + O(\varepsilon)
\end{align*}
for some constant $c\equiv c(n,\lambda,\Lambda,p,\Omega_{1}',\Omega_{1})$, where in the last display we have used the relation \eqref{eqn:detachment} and Theorem \ref{thm:CPT}. Therefore, we find the constant $p_{0} = \frac{1+\alpha}{\alpha}$ and the inequality \eqref{lem:UE:1} is still valid for any $p\geqslant p_0$ by selecting $\alpha$ close enough to $0$.
\end{proof}

In the following lemmas, we obtain local $L^{\infty}$ estimates for the difference of Schr\"odinger and Kantorovich potentials $u_{\varepsilon} - u_0$ with a suitable convergence rate of power of $\varepsilon$ and for their corresponding first and second derivatives as well.

\begin{lemma}[$L^{\infty}$-estimate]
    \label{lem:0E}
    Let $u_{\varepsilon} : \Omega_1\rightarrow \R$ and $v_{\varepsilon} : \Omega_2\rightarrow \R$ be Schr\"odinger potentials satisfying \eqref{schrod:2} and strictly convex functions $u_0 : \Omega_{1}\rightarrow \R$, $v_{0} : \Omega_{2}\rightarrow \R$ be Kantorovich potentials given in \eqref{Kant_poten}. Then there exists $a\equiv a(n,\lambda,\Lambda)\in (0,1)$ such that for every compact convex subset $\Omega_{1}'\ssubset \Omega_{1}$, the following estimate 
    \begin{align}
        \label{lem:0E:1}
        | u_{\varepsilon}(x)-u_0(x)-(u_{\varepsilon}(y)-u_{0}(y))|
        \leqslant A_{1}\varepsilon^{a}
    \end{align}
    holds for a constant $A_{1}$ depending only on $n,\lambda$, $\Lambda$, $\Omega_{1}'$ and $\Omega_{1}$, whenever $x,y\in \Omega_{1}'$. Moreover, for any compact convex subset $\Omega_{1}'\ssubset \Omega_{1}$, if the function $u_{\varepsilon}-u_{0}$ has a zero on $\Omega_{1}'$, then
    \begin{align}
        \label{lem:ABF:0}
        \norm{u_{\varepsilon}-u_0}_{L^{\infty}(\Omega_1')} \leqslant A_{1}\varepsilon^{a},
    \end{align}
    and
    \begin{align}
        \label{lem:ABF:3}
        v_{0}(y)-v_{\epsilon}(y) \leqslant A_{2}\varepsilon^{a}\quad\text{for all}\quad y\in\nabla u_{0}(\Omega_{1}')
    \end{align}
    for some constant $A_{2}\equiv A_{2}(n,\lambda,\Lambda,\Omega_{1}',\Omega_{1})$.
\end{lemma}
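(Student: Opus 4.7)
The plan is to first turn the $L^p$ gradient estimate of Lemma \ref{lem:UE} into an oscillation bound for $u_\varepsilon - u_0$ via Morrey's embedding, then to read off the $L^\infty$-bound under the zero-hypothesis as an immediate consequence, and finally to derive \eqref{lem:ABF:3} by applying the first Schr\"odinger equation in \eqref{schrod:2} together with the concentration estimate from Proposition \ref{prop:convex}.

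\textbf{Steps 1 and 2 (oscillation and $L^\infty$).} I choose $p$ large enough that both $p \geqslant p_0$ (so Lemma \ref{lem:UE} applies) and $p>n$. Since $f \geqslant \lambda$ and $\Omega_1' \ssubset \Omega_1$,
\[
\I_{\Omega_1'} |\nabla u_\varepsilon - \nabla u_0|^p\,dx \leqslant \frac{1}{\lambda}\I_{\Omega_1'} |\nabla u_\varepsilon - \nabla u_0|^p\,d\mu \leqslant c\varepsilon\log(1/\varepsilon)+O(\varepsilon).
\]
Because $u_\varepsilon$ is smooth, $u_0 \in C^{1,\alpha}_{\loc}(\Omega_1)$, and $\Omega_1'$ is a compact convex (hence Lipschitz) domain, Morrey's inequality gives
\[
\osc_{\Omega_1'}(u_\varepsilon - u_0) \leqslant C(\diam \Omega_1')^{1-n/p}\|\nabla u_\varepsilon - \nabla u_0\|_{L^p(\Omega_1')} \leqslant C\bigl(\varepsilon\log(1/\varepsilon)\bigr)^{1/p}.
\]
Absorbing the logarithm into a slightly smaller power of $\varepsilon$ yields \eqref{lem:0E:1} for any exponent $a<1/p$; for instance $a = 1/(2p)$ works. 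If in addition $u_\varepsilon(z)=u_0(z)$ for some $z\in\Omega_1'$, plugging $y=z$ into \eqref{lem:0E:1} gives \eqref{lem:ABF:0} directly.

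\textbf{Step 3 (bound for the dual potential).} Let $y_0 = \nabla u_0(x_0)$ with $x_0 \in \Omega_1'$. From \eqref{eq:1st:2}, $\nabla v_\varepsilon(y)$ is a probability-weighted average of points in $\Omega_1$, so $v_\varepsilon$ is $M$-Lipschitz on $\Omega_2$ with $M:=\sup_{\Omega_1}|x|$. Combining this with $\langle x_0, y\rangle \geqslant \langle x_0, y_0\rangle - M|y-y_0|$ and $g\geqslant\lambda$, the first equation of \eqref{schrod:2} yields
\[
\exp\!\left(\frac{u_\varepsilon(x_0)}{\varepsilon}\right) \geqslant \lambda\exp\!\left(\frac{\langle x_0,y_0\rangle - v_\varepsilon(y_0)}{\varepsilon}\right)\I_{\Omega_2 \cap B_r(y_0)} \exp\!\left(-\frac{|y-y_0|}{r}\right)dy,
\]
with the choice $r = \varepsilon/(2M)$. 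For $\varepsilon$ small, Proposition \ref{prop:convex} bounds the last integral from below by $c|B_r(y_0)|\geqslant c'\varepsilon^n$; taking logarithms and rearranging,
\[
v_\varepsilon(y_0) \geqslant \langle x_0, y_0\rangle - u_\varepsilon(x_0) - Cn\varepsilon\log(1/\varepsilon).
\]
Since $y_0=\nabla u_0(x_0)$ enforces the Legendre identity $\langle x_0,y_0\rangle = u_0(x_0)+v_0(y_0)$, combining with \eqref{lem:ABF:0} from Step 2 yields $v_0(y_0)-v_\varepsilon(y_0)\leqslant A_1\varepsilon^a + Cn\varepsilon\log(1/\varepsilon) \leqslant A_2\varepsilon^a$ for a suitable $A_2$ and $\varepsilon$ small.

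The main obstacle is the tension between the two constraints $p\geqslant p_0=(1+\alpha)/\alpha$ and $p>n$ in Step 1: for the low-regularity regime (small $\alpha$) both can be simultaneously satisfied but force $a$ to be quite small, which is responsible for the sub-optimality of the final H\"older exponent $\beta$ noted in the remark after Theorem \ref{thm:main}. The delicate computation is the logsumexp lower bound in Step 3, where Proposition \ref{prop:convex} is essential so that the integral picks up the full ball volume $\varepsilon^n$ even when $y_0$ sits near $\partial\Omega_2$, thereby yielding only the desired logarithmic loss $\varepsilon\log(1/\varepsilon)$.
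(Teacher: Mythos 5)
Your Steps 1 and 2 follow the paper's argument essentially verbatim: the paper also converts the $L^p$ gradient bound of Lemma \ref{lem:UE} into an oscillation bound via the Sobolev--Morrey embedding (choosing $p>n$ and $p\geqslant p_0$, possible by taking $\alpha$ small), absorbs the logarithm into a smaller power of $\varepsilon$, and uses the normalization freedom $(u_{\varepsilon}-d,v_{\varepsilon}+d)$ to place a zero of $u_{\varepsilon}-u_0$ in $\Omega_1'$. Your Step 3, however, is a genuinely different route to \eqref{lem:ABF:3}, and it is correct. The paper works with the \emph{second} Schr\"odinger equation, writes $\exp((v_{\varepsilon}(y)-v_0(y))/\varepsilon)$ as an integral over $\Omega_1$, and lower-bounds it near $\bar x=\nabla v_0(y)$ using the Lipschitz bound $u_0(x)+v_0(y)-\langle x,y\rangle\leqslant L|x-\bar x|$ for the detachment function, together with Proposition \ref{prop:convex} applied to the convex set $\Omega_1'$. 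You instead work with the \emph{first} equation at a single point $x_0$, localize the $y$-integral near $y_0=\nabla u_0(x_0)$ using the uniform Lipschitz bound on $v_{\varepsilon}$ coming from the representation formula \eqref{eq:1st:2} (a bound that is indeed uniform in $\varepsilon$ since $\nabla v_{\varepsilon}(y)$ is a barycenter of points of $\Omega_1$), apply Proposition \ref{prop:convex} on $\Omega_2$ (convex by hypothesis, with $y_0\in\overline{\Omega_2}$ since $\nabla u_0(\Omega_1)=\Omega_2$), and then pass from a lower bound on $v_{\varepsilon}(y_0)$ to \eqref{lem:ABF:3} via the Legendre identity $\langle x_0,y_0\rangle=u_0(x_0)+v_0(y_0)$ and the already-proved \eqref{lem:ABF:0}. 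Your version trades the Lipschitz estimate on the limit detachment function for the Lipschitz estimate on $v_{\varepsilon}$ plus the duality identity; both incur exactly the same $\varepsilon^n$ volume factor and hence the same $n\varepsilon\log(1/\varepsilon)$ loss, which is harmless against $\varepsilon^a$ with $a<1$. The only cosmetic discrepancies are that the paper's formula \eqref{eq:1st:2} has a typo ($y$ should be $x$ in the integrand), which you have implicitly corrected, and that your constants also depend on $\Omega_2$, just as the paper's do through $L=\diam(\Omega_2)$.
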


\begin{proof}
    Applying the triangle inequatity and Sobolev-Poincar\'e inequality, see for instance \cite[Theorem 7.10]{GT77}, for any points $x,y\in \Omega_{1}', $ we have 
\begin{align}
    \label{ABF:s1:3}
    | u_{\varepsilon}(x)-u_0(x)-(u_{\varepsilon}(y)-u_{0}(y))|
    &\leqslant
    |u_{\varepsilon}(x)-u_0(x)-(u_{\varepsilon}-u_{0})_{\Omega_{1}'}| 
    \notag\\&\quad 
    + |u_{\varepsilon}(y)-u_{0}(y)-(u_{\varepsilon}-u_{0})_{\Omega_{1}'}|
    \notag\\
    &\leqslant
    2\norm{u_{\varepsilon}-u_{0}-(u_{\varepsilon}-u_{0})_{\Omega_{1}'}}_{L^{\infty}(\Omega_{1}')} 
    \notag\\
    &\leqslant 
    c(n,p) |\Omega_{1}'|^{\frac{1}{n}-\frac{1}{p}}\norm{\nabla u_{\varepsilon}-\nabla u_{0}}_{L^{p}(\Omega_{1}')}
    \notag \\&
    \leqslant
    c(n,p)\lambda^{-\frac{1}{p}} |\Omega_{1}'|^{\frac{1}{n}-\frac{1}{p}} \left(\I_{\Omega_{1}'} |\nabla u_{\varepsilon}(x)-\nabla u_{0}(x)|^{p}\,d\mu(x) \right)^{\frac{1}{p}}
    \notag \\&
    \leqslant
    c(n,p) \lambda^{-\frac{1}{p}} |\Omega_{1}|^{\frac{1}{n}-\frac{1}{p}} 
    \left[ \frac{Ln}{2}\varepsilon\log\left(\frac{1}{\varepsilon}\right) + O(\varepsilon) \right]^{\frac{1}{p}},
\end{align}
where we have used the estimate \eqref{density:2} and Lemma \ref{lem:UE} for $p>n$. Note that such choice of $p$ is possible by choosing $\alpha$ close enough to zero. Since Schr\"odinger potentials $(\varphi_{\varepsilon}, \psi_{\varepsilon})$ are unique up to adding and subtracting a constant, the pair $(u_{\varepsilon}-d, v_{\varepsilon}+d)$ still satisfies the same system of equations \eqref{schrod:2}. Therefore, we can replace $u_{\varepsilon}$ with $u_{\varepsilon}-d$ for a constant $d$ such that the function $u_{\varepsilon}-u_0$ has a zero on $\Omega_{1}'$. This observation together with the estimate \eqref{ABF:s1:3} follows the estimates \eqref{lem:0E:1} and \eqref{lem:ABF:0} if $\varepsilon \ll 1$. It is clear that the estimates \eqref{lem:0E:1} and \eqref{lem:ABF:0} hold true when parameter $\varepsilon$ bounded below from zero. 

Now we focus on showing the estimate \eqref{lem:ABF:3}. Using the second equation of \eqref{schrod:2} and the local estimate \eqref{lem:ABF:0}, for every $y\in \Omega_{2}':= \nabla u_{0}(\Omega_{1}')$, we have 
\begin{align}
    \label{ABF:s2:1}
    \exp\left( \frac{v_{\varepsilon}(y)-v_{0}(y)}{\varepsilon}\right) 
    &=
    \I_{\Omega_{1}} \exp\left( \frac{\inner{x}{y}-u_{\varepsilon}(x)-v_{0}(y)}{\varepsilon} \right)\,d\mu(x)
    \notag\\&
    \geqslant
    \I_{\Omega_{1}'} \exp\left( \frac{\inner{x}{y}-u_{0}(x)-v_{0}(y)}{\varepsilon} \right) \exp\left( \frac{u_{0}(x)-u_{\varepsilon}(x)}{\varepsilon} \right)\,d\mu(x)
    \notag\\&
    \geqslant
    \lambda \exp\left( -A_{1} \varepsilon^{a-1} \right)\I_{\Omega_{1}'} \exp\left( \frac{-L|x-\nabla v_{0}(y)|}{\varepsilon} \right)\,d\mu(x),
\end{align}
where we have also used the property that the function $F(x):= u_0(x)+v_0(y) - \langle x, y \rangle $ is $L$-Lipschitz with $L=\diam(\Omega_2)$ (since $|\nabla F (x)|= |\nabla u_0(x)- y| \leqslant  \diam(\Omega_2)$) and moreover $F(\bar{x})=0$ for $\bar{x}= \nabla v_0(y)$ and so
\begin{align*}
    %\label{ABF:s2:2}
    u_{0}(x) + v_{0}(y)-\inner{x}{y} = F(x)-F(\bar{x}) \leqslant L|x-\bar{x}| = L|x-\nabla v_{0}(y)|\quad \forall x\in \Omega_1, y\in \Omega_2.
\end{align*}
Recalling the basic facts \cite{ambrosio13, Bre91} that 
\begin{align*}
    %\label{ABF:s2:3}
    \nabla v_{0}(\nabla u_{0}(x)) = x
    \quad
    \text{and}
    \quad
    \nabla u_{0}(\nabla v_{0}(y)) = y,
\end{align*}
we have $\nabla v_{0}(y)\in \Omega_{1}'$ and we continue to estimate \eqref{ABF:s2:1} as 
\begin{align}
    \label{ABF:s2:4}
    \exp\left( \frac{v_{\varepsilon}(y)-v_{0}(y)}{\varepsilon} \right)
    &\geqslant
    \lambda\exp\left( -A_{1}\varepsilon^{a-1} \right) \I_{\Omega_{1}'\cap B_{\varepsilon/L}(\nabla v_{0}(y))}\exp\left(\frac{-L|x-\nabla v_{0}(y)|}{\varepsilon}\,dx \right)
    \notag \\& 
    \geqslant
    c_{0}\varepsilon^{n}\exp(-A_{1}\varepsilon^{a-1})
\end{align}
for a constant $c_{0}\equiv c_{0}(\lambda,L,\Omega_{1}')$, where we have applied Proposition \ref{prop:convex} thanks to the convexity assumption of $\Omega_{1}'$. On other hand, recalling $a\in (0,1)$ and an elementary inequality that there exists a constant $A_{1}'\equiv A_{1}'(n,\lambda,\Lambda,L,\Omega_{1}')$ such that
\begin{align*}
    %\label{ABF:s2:5}
    \frac{\varepsilon^{-n}}{c_{0}} \leqslant \exp\left( A_{1}'\varepsilon^{a-1} \right)\quad \forall \varepsilon\in (0,1).
\end{align*}
Therefore, this estimate together with the estimate \eqref{ABF:s2:4} implies the validity of \eqref{lem:ABF:3} when $\varepsilon\in (0,1)$. There is nothing to show for \eqref{lem:ABF:3} when $\varepsilon\geqslant 1$.
\end{proof}

The next result establishes a uniform convergence estimate for the gradients of the Schr\"odinger potentials in terms of the regularization parameter $\varepsilon$.

\begin{lemma}[Gradient estimate]
    \label{lem:1E}
    Let $u_{\varepsilon} : \Omega_1\rightarrow \R$ and $v_{\varepsilon} : \Omega_2\rightarrow \R$ be Schr\"odinger potentials satisfying \eqref{schrod:2} and strictly convex functions $u_0 : \Omega_{1}\rightarrow \R$, $v_{0} : \Omega_{2}\rightarrow \R$ be Kantorovich potentials given in \eqref{Kant_poten}. Then there exists a constant $b\equiv b(n,\lambda,\Lambda)>0$ such that for every convex compact subset $\Omega_{1}'\ssubset \Omega_{1}$ the following estimate 
    \begin{align}
        \label{lem:ABF:1}
        \norm{\nabla u_{\varepsilon}-\nabla u_0}_{L^{\infty}(\Omega_1')} \leqslant B\varepsilon^{b}
    \end{align}
    holds for a constant $B$ depending only on $n,\lambda$, $\Lambda$, $\Omega_{1}'$ and $\Omega_{1}$.
\end{lemma}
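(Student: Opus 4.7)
My plan is to combine three ingredients: the $L^{\infty}$-closeness $\|u_{\varepsilon}-u_{0}\|_{L^{\infty}}\leqslant A_{1}\varepsilon^{a}$ from Lemma~\ref{lem:0E}, the convexity of $u_{\varepsilon}$ (which is transparent from the representation \eqref{eq:2nd:1}: $\nabla^{2}u_{\varepsilon}(x)$ is the covariance matrix of $y$ under the probability measure $e^{-s(x,\cdot)/\varepsilon}d\nu$ on $\Omega_{2}$, hence positive semi-definite), and Caffarelli's $C^{1,\alpha_{0}}_{\loc}$-regularity of $u_{0}$ from Theorem~\ref{thm:Caf}. The underlying fact is a classical convex-interpolation inequality: if two convex functions are $\delta$-close in $L^{\infty}$ on a ball $B_{r}$ and one has $\alpha_{0}$-H\"older gradient, the difference of gradients at the center is controlled by $r^{\alpha_{0}}+\delta/r$, which after optimization in $r$ gives a bound of the form $\delta^{\alpha_{0}/(1+\alpha_{0})}$.

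Concretely, I would pick a compact convex set $\Omega_{1}''$ with $\Omega_{1}'\ssubset\Omega_{1}''\ssubset\Omega_{1}$, set $d_{0}:=\dist(\Omega_{1}',\partial\Omega_{1}'')>0$, and normalize $(u_{\varepsilon},v_{\varepsilon})$ so that $u_{\varepsilon}-u_{0}$ vanishes at some point of $\Omega_{1}'\subset\Omega_{1}''$; Lemma~\ref{lem:0E} applied on $\Omega_{1}''$ then yields $\|u_{\varepsilon}-u_{0}\|_{L^{\infty}(\Omega_{1}'')}\leqslant A_{1}\varepsilon^{a}$. For $x_{0}\in\Omega_{1}'$, a unit vector $\xi\in\R^{n}$, and $r\in(0,d_{0})$, convexity of $u_{\varepsilon}$ combined with the Taylor expansion of $u_{0}$ with $C^{1,\alpha_{0}}$-remainder (coming from Theorem~\ref{thm:Caf} applied to $\Omega_{1}''$) gives
\[
r\,\nabla u_{\varepsilon}(x_{0})\cdot\xi
\leqslant u_{\varepsilon}(x_{0}+r\xi)-u_{\varepsilon}(x_{0})
\leqslant u_{0}(x_{0}+r\xi)-u_{0}(x_{0})+2A_{1}\varepsilon^{a}
\leqslant r\,\nabla u_{0}(x_{0})\cdot\xi+\tfrac{M_{0}}{1+\alpha_{0}}r^{1+\alpha_{0}}+2A_{1}\varepsilon^{a}.
\]
Applying the same chain with $-\xi$ (again using convexity of $u_{\varepsilon}$) yields the reverse inequality, so
\[
\bigl|(\nabla u_{\varepsilon}-\nabla u_{0})(x_{0})\cdot\xi\bigr|
\leqslant \tfrac{M_{0}}{1+\alpha_{0}}r^{\alpha_{0}}+\tfrac{2A_{1}\varepsilon^{a}}{r}.
\]

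Choosing $r=c\,\varepsilon^{a/(1+\alpha_{0})}$ for a suitable constant $c$ balances the two contributions and produces $|(\nabla u_{\varepsilon}-\nabla u_{0})(x_{0})\cdot\xi|\leqslant B\,\varepsilon^{b}$ with $b=\tfrac{a\,\alpha_{0}}{1+\alpha_{0}}>0$; maximizing over unit $\xi$ (equivalently taking $\xi$ in the direction of the gradient difference) delivers \eqref{lem:ABF:1}. The choice of $r$ is admissible as soon as $c\,\varepsilon^{a/(1+\alpha_{0})}<d_{0}$, i.e.\ for $\varepsilon$ sufficiently small; for $\varepsilon$ bounded away from $0$ the conclusion is trivial because both $\nabla u_{\varepsilon}(x_{0})$ and $\nabla u_{0}(x_{0})$ lie in $\overline{\Omega_{2}}$. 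The conceptual difficulty is essentially already packaged into Lemma~\ref{lem:0E} (which upgrades Lemma~\ref{lem:UE}'s $L^{p}_{\mu}$-rate to an $L^{\infty}$-rate through a Sobolev--Morrey argument); the main observation here is simply that convexity of $u_{\varepsilon}$, a consequence of \eqref{eq:2nd:1}, allows the transfer of $L^{\infty}$-closeness into pointwise closeness of the gradients.
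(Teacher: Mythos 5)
Your proof is correct, but it takes a genuinely different route from the paper. The paper proves \eqref{lem:ABF:1} by a Laplace--type concentration argument on the representation formula \eqref{eq:1st:1}: it writes $\nabla u_{\varepsilon}(x)-\nabla u_{0}(x)$ as an integral of $y-\nabla u_{0}(x)$ against the Gibbs weight, splits $\Omega_{2}$ into a carefully chosen intermediate set $\Omega_{2}''$ and its complement, and controls the weight using the $p$-detachment of $(u_{0},v_{0})$ (Lemma \ref{lem:Detach_local}) together with \emph{both} estimates \eqref{lem:ABF:0} and \eqref{lem:ABF:3} of Lemma \ref{lem:0E}. You instead use only the oscillation bound \eqref{lem:0E:1}, the convexity of $u_{\varepsilon}$ (correctly read off from the positive semi-definiteness of \eqref{eq:2nd:1}), and Caffarelli's $C^{1,\alpha_{0}}$ bound, via the classical interpolation ``two convex functions $\delta$-close in $L^{\infty}$ with one gradient $\alpha_{0}$-H\"older have gradients $\lesssim\delta^{\alpha_{0}/(1+\alpha_{0})}$-close''; your inequality chain and the optimization $r\sim\varepsilon^{a/(1+\alpha_{0})}$ are sound, the admissibility of $r$ and the trivial regime $\varepsilon\gtrsim 1$ are handled, and you even land on an exponent $b=\tfrac{a\alpha_{0}}{1+\alpha_{0}}$ of the same form as the paper's $b=a/p$ with $p=\tfrac{1+\alpha}{\alpha}$. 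Your argument is shorter, bypasses \eqref{lem:ABF:3} and the choice of $\Omega_{2}''$ entirely, and keeps the dependence of $b$ on $(n,\lambda,\Lambda)$ only, as required. What the paper's heavier machinery buys is reuse: essentially the same splitting and detachment estimates are recycled verbatim in the Hessian bound of Lemma \ref{lem:ABF}, where a convexity-interpolation trick alone would not suffice, so if one adopted your proof here one would still have to set up the Laplace argument for the second derivatives.
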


\begin{proof}
    Let us fix a convex compact open subset $\Omega_{1}'\ssubset \Omega_{1}$. Then we are able to assume that the function $u_{\varepsilon}$ is normalized on $\Omega_{1}'$ in the sense that the function $u_{\varepsilon}-u_{0}$ has a zero on $\Omega_{1}$ since the pair $(u_{\varepsilon}-d,v_{\varepsilon}+d)$ are Schr\"odinger potentials satisfying \eqref{schrod:2} for any constant $d$. In fact this normalization does not affect the gradients $Du_{\varepsilon}$ and $Dv_{\varepsilon}$. However, such normalization enables us to apply the estimates \eqref{lem:ABF:0} and \eqref{lem:ABF:3} of Lemma \ref{lem:0E}. The representation formula \eqref{eq:1st:2} and straightforward computations imply
\begin{align*}
    %\label{ABF:s3:1}
    |\nabla v_{\varepsilon}(y)-\nabla v_{0}(y)| 
    &\leqslant
    \I_{\Omega_{1}} |x-\nabla v_{0}(y)|\exp\left( \frac{\inner{x}{y}-u_{\varepsilon}(x)-v_{\varepsilon}(y)}{\varepsilon} \right)\,d\mu(x)
    \notag\\&
    \leqslant
    \diam(\Omega_{1})\quad \forall y\in\Omega_{2}.
\end{align*}
Therefore, using the mean value theorem and the convexity of $\Omega_{2}$, for all $y_{1},y_{2}\in \Omega_{2}$, we find 
\begin{align}
    \label{ABF:s3:2}
    v_{0}(y_1)-v_{\varepsilon}(y_1)-(v_{0}(y_2)-v_{\varepsilon}(y_2))
    &\leqslant
    |\nabla v_{0}(y_{t})-\nabla v_{\varepsilon}(y_{t})||y_1-y_2|
    \notag\\&
    \leqslant
    \diam(\Omega_{1})|y_1-y_2|,
\end{align}
where $y_{t} = ty_1 + (1-t)y_{2}\in \Omega_{2}$ is a point for some $t\in [0,1]$. Then we denote $\Omega_{2}':= \nabla u_{0}(\Omega_{1}')$ here and in the rest of the proof.
It is clear \cite[Lemma A.22]{Fig17} that $\Omega_{2}'\ssubset \Omega_{2}$ ($\nabla u_{0}(\Omega_{1})$ is compact and $\nabla u_{0}(\Omega_{1}) = \Omega_{2}$), and we also fix a compact open subset $\Omega_{2}''\ssubset \Omega_{2}$ such that $\Omega_{2}'\ssubset \Omega_{2}'' \ssubset \Omega_{2}$ and 
\begin{align*}
    %\label{ABF:s3:3}
    -\frac{L}{2}\left( \dist(\Omega_{2}',\partial \Omega_{2}'') + \dist(y,\partial \Omega_{2}'') \right)^{p} + 2\diam(\Omega_{1})\dist(y,\partial\Omega_{2}'') \leqslant 0 \quad \forall y\in \Omega_{2}\setminus \Omega_{2}'',
\end{align*}
in which the constant $L$ and $p$ are determined through the application of Lemma \ref{lem:Detach_local} (ii) for $(u_{0},v_{0})$ in the set $\Omega_{1}'\ssubset \Omega_{1}$. Note that the selection of set $\Omega_{2}''$ is possible and independent of the parameter $\varepsilon$.

Now we focus on showing the estimate \eqref{lem:ABF:1}. Using the representation formula \eqref{eq:1st:1}, for every $x\in\Omega_{1}'$, we have 
    \begin{align}
        \label{ABF:s3:4}
        |\nabla u_{\varepsilon}(x)-\nabla u_{0}(x)| 
        &\leqslant
        \I_{\Omega_2} |y-\nabla u_{0}(x)|\exp\left(\frac{\inner{x}{y}-u_{\varepsilon}(x)-v_{\varepsilon}(y)}{\varepsilon}\right)\,d\nu(y)
        \notag\\&
        =
        \I_{\Omega_2''} |y-\nabla u_{0}(x)|\exp\left(\frac{\inner{x}{y}-u_{\varepsilon}(x)-v_{\varepsilon}(y)}{\varepsilon}\right)\,d\nu(y)
        \notag\\&
        \quad +
        \I_{\Omega_{2}\setminus \Omega_{2}''} |y-\nabla u_{0}(x)|\exp\left(\frac{\inner{x}{y}-u_{\varepsilon}(x)-v_{\varepsilon}(y)}{\varepsilon}\right)\,d\nu(y)
        \notag\\&
        =: I_{\Omega_{2}''}(x) + J_{\Omega_{2}''}(x),
    \end{align}
where $\Omega_{2}' = \nabla u_{0}(\Omega_{1}')$ and $\Omega_{2}''\ssubset \Omega_{2}$ is the set with $\Omega_{2}'\ssubset \Omega_{2}''$ as we have fixed above.

Now we estimate the terms $I_{\Omega_{2}''}(x)$ and $J_{\Omega_{2}''}(x)$ in the last display. Note that the convex hull of $\nabla v_{0}(\Omega_{2}'')$ is compactly contained in $\Omega_{1}$ and contains $\Omega_{1}'$ where $u_{\varepsilon}-u_{0}$ has a zero on $\Omega_{1}'\subset \nabla v_{0}(\Omega_{2}'')$ which allows us to employ the estimates \eqref{lem:ABF:0} and \eqref{lem:ABF:3} of Lemma \ref{lem:0E}. 
Therefore, applying Lemma \ref{lem:Detach_local} to $(u_{0},v_{0})$ and using the estimates \eqref{lem:ABF:0} and \eqref{lem:ABF:3} of Lemma \ref{lem:0E}, we find
    \begin{align}   
        \label{ABF:s3:5}
        I_{\Omega_{2}''}(x) &= 
        \I_{\Omega_{2}''} |y-\nabla u_{0}(x)|\exp\left(\frac{\inner{x}{y}-u_{\varepsilon}(x)-v_{\varepsilon}(y)}{\varepsilon}\right)\chi_{\{|y-\nabla u_{0}(x)|\leqslant M_0\varepsilon^{m_0} \}}\,d\nu(y)
        \notag\\&
         \quad +
        \I_{\Omega_{2}''} |y-\nabla u_{0}(x)|\exp\left(\frac{\inner{x}{y}-u_{\varepsilon}(x)-v_{\varepsilon}(y)}{\varepsilon}\right)\chi_{\{|y-\nabla u_{0}(x)|>M_0\varepsilon^{m_0} \}}\,d\nu(y)        
        \notag\\&
        \leqslant
        M_0\varepsilon^{m_0} +
        \I_{\Omega_{2}''} |y-\nabla u_{0}(x)|\exp\left(\frac{\inner{x}{y}-u_{0}(x)-v_{0}(y)}{\varepsilon}\right)
        \notag\\&
        \qquad\qquad\qquad\times
        \exp\left(\frac{u_0(x)-u_{\varepsilon}(x)}{\varepsilon}\right) \exp\left(\frac{v_0(y)-v_{\varepsilon}(y)}{\varepsilon}\right)
        \chi_{\{|y-\nabla u_{0}(x)|>M_0\varepsilon^{m_0} \}}\,d\nu(y)
        \notag\\&
        \leqslant
         M_0\varepsilon^{m_0} +
        \I_{\Omega_{2}''} |y-\nabla u_{0}(x)|\exp\left( (A_{1}+A_{2})\varepsilon^{a-1}-L|y-\nabla u_{0}(x)|^{p}\varepsilon^{-1}\right)
         \notag\\&
        \qquad\qquad\qquad\times
        \chi_{\{|y-\nabla u_{0}(x)|>M_0\varepsilon^{m_0} \}}\,d\nu(y),
    \end{align}
where the constants $M_0$ and $m_0$ in the display above to be selected as follows:
\begin{align*}
    %\label{ABF:s3:6}
    M_0 = \left( \frac{2(A_{1}+A_{2})}{L} \right)^{\frac{1}{p}}
    \quad\text{and}\quad
    m_0 = \frac{a}{p}.
\end{align*}
Note that the constants $A_{1}$ and $A_{2}$ are universal (independent of $\varepsilon$) and determined via Lemma \ref{lem:0E}. In turn, inserting the selection of $M_0$ and $m_0$ in the last into \eqref{ABF:s3:5},  for any $x\in \Omega_{1}'$, we obtain 
\begin{align*}
    %\label{ABF:s3:7}
    I_{\Omega_{2}''}(x) \leqslant
    \left( \frac{2(A_{1}+A_{2})}{L} \right)^{\frac{1}{p}}\varepsilon^{\frac{a}{p}} + \I_{\Omega_{2}''} |y-\nabla u_0(x)|\exp\left(-\frac{L}{2}|y-\nabla u_0(x)|^{p}\varepsilon^{-1} \right)\,d\nu(y).
\end{align*}
It is straightforward to see that a function $g : [0,\infty)\rightarrow [0,\infty)$ given by 
\begin{align*}
    %\label{ABF:s3:8}
    g(t) := t\exp \left(-L/2t^{p}\varepsilon^{-1} \right)
\end{align*}
attains its maximum value of $\left( \frac{2\varepsilon}{Lp} \right)^{\frac{1}{p}}\exp\left(  - \frac{1}{p} \right)$ at $t = \left(\frac{2\varepsilon}{Lp} \right)^{\frac{1}{p}}$. Therefore, we find
\begin{align}
    \label{ABF:s3:9}
    I_{\Omega_{2}''}(x)
    \leqslant
    \left( \frac{2(A_{1}+A_{2})}{L} \right)^{\frac{1}{p}}\varepsilon^{\frac{a}{p}} + 
    \left( \frac{2\varepsilon}{Lp} \right)^{\frac{1}{p}}\exp\left(- \frac{1}{p} \right)
    \quad\forall x\in \Omega_{1}'.
\end{align}

Now we turn our attention to estimating $J_{\Omega_{2}''}(x)$ in \eqref{ABF:s3:4}. For this, again applying Lemma \ref{lem:Detach_local} for $(u_0,v_0)$ in the set $\Omega_{1}'$ and using \eqref{lem:ABF:0}, for every $x\in \Omega_{1}'$, we have 
\begin{align}
    \label{ABF:s3:10}
    J_{\Omega_{2}''}(x) 
    &= 
    \I_{\Omega_{2}\setminus\Omega_{2}''} |y-\nabla u_{0}(x)|\exp\left(\frac{\inner{x}{y}-u_{0}(x)-v_{0}(y)}{\varepsilon}\right)
        \exp\left(\frac{u_0(x)-u_{\varepsilon}(x)}{\varepsilon}\right) 
    \notag\\&
    \qquad\times    
        \exp\left(\frac{v_0(y)-v_{\varepsilon}(y)}{\varepsilon}\right)\,d\nu(y)
    \notag\\&
    \leqslant
    \I_{\Omega_{2}\setminus\Omega_{2}''} |y-\nabla u_{0}(x)|\exp\left(-L|y-\nabla u_{0}(x)|^{p}\varepsilon^{-1} + A_{1}\varepsilon^{a-1}\right)
    \exp\left(\frac{v_0(y)-v_{\varepsilon}(y)}{\varepsilon}\right)\,d\nu(y)
\end{align}
for a constant $A_{1}\equiv A_{1}(n,\lambda,\Lambda,\Omega_{1}',\Omega_{1})$ determined via Lemma \ref{lem:0E}. At this moment, recalling \eqref{ABF:s3:2}, for every $y\in \Omega_{2}\setminus \Omega_{2}''$ there is a point $z_{y}\in \Omega_{2}''$ such that 
\begin{align*}
    %\label{ABF:s3:11}
    v_{0}(y)-v_{\varepsilon}(y) \leqslant v_{0}(z_{y})-v_{\varepsilon}(z_{y}) + 2\diam(\Omega_{1})\dist(y,\partial\Omega_{2}'').
\end{align*}
Inserting this estimate in the last display into \eqref{ABF:s3:10} and applying \eqref{lem:ABF:3} for the convex closure of set $\Omega_{1}'' = \nabla v_{0} (\Omega_{2}'')$, we find 
\begin{align}
    \label{ABF:s3:12}
    J_{\Omega_{2}''}(x) 
    &\leqslant
    \I_{\Omega_{2}\setminus\Omega_{2}''} |y-\nabla u_{0}(x)|\exp\left(-L|y-\nabla u_{0}(x)|^{p}\varepsilon^{-1} + A_{1}\varepsilon^{a-1}\right)
    \exp\left(\frac{v_0(y)-v_{\varepsilon}(y)}{\varepsilon}\right)\,d\nu(y)
    \notag\\&
    \leqslant
    \I_{\Omega_{2}\setminus\Omega_{2}''} |y-\nabla u_{0}(x)|
     \notag\\&
    \times
    \exp\left(-L|y-\nabla u_{0}(x)|^{p}\varepsilon^{-1} + (A_{1}+A_{2})\varepsilon^{a-1} +2\diam(\Omega_{1})\dist(y,\partial\Omega_{2}'')\varepsilon^{-1}\right)
    \,d\nu(y)
\end{align}
Using the triangle inequality and \eqref{ABF:s3:2}, we observe that 
\begin{align*}
    %\label{ABF:s3:13}
    &-\frac{L}{2}|y-\nabla u_{0}(x)|^{p} + 2\diam(\Omega_{1})\dist(y,\partial\Omega_{2}'')
    \notag\\&
    \leqslant
    -\frac{L}{2}\left( \dist(\Omega_{2}',\partial \Omega_{2}'') + \dist(y,\partial \Omega_{2}'') \right)^{p} + 2\diam(\Omega_{1})\dist(y,\partial\Omega_{2}'') \leqslant 0 
\end{align*}
holds for every $x\in \Omega_{1}'$ and $y\in \Omega_{2}\setminus \Omega_{2}''$ thanks to the choice of $\Omega_{2}''$. Therefore, plugging the last estimate in \eqref{ABF:s3:12}, we find 
\begin{align*}
    %\label{ABF:s3:14}
    J_{\Omega_{2}''}(x) \leqslant
    \I_{\Omega_{2}\setminus \Omega_{2}''} |y-\nabla u_{0}(x)|\exp\left(-L/2|y-\nabla u_{0}(x)|^{p}\varepsilon^{-1} + (A_{1}+A_{2})\varepsilon^{a-1}\right)\,d\nu(y)\quad
    \forall x\in\Omega_{1}.
\end{align*}
Arguing similarly as for $I_{\Omega_{2}''}(x)$ in \eqref{ABF:s3:5}-\eqref{ABF:s3:9} $(x\in \Omega_{1}')$ together with the estimate \eqref{ABF:s3:9}, we arrive at the validity of the estimate \eqref{lem:ABF:1}.

\end{proof}

While the Kantorovich potentials are known to be smooth and strictly convex under suitable regularity conditions on the data, the Schr\"odinger potentials inherit their regularity from the entropic regularization and may exhibit increasingly singular features as $\varepsilon \to 0^{+}$. The following lemma provides a quantitative upper bound on the Hessian of $u_\varepsilon$, showing that the potentials can became increasingly steep as the regularization vanishes, but in a controlled way. Notice that thanks to the representation formula \eqref{eq:2nd:1} as long as $\Omega_Y$ is bounded we have $\| \nabla^2 u_{\varepsilon} \|_{\infty} \lesssim \frac 1{\varepsilon}$: we improve this estimate, crucially allowing for an exponent $m <1$.

\begin{lemma}[Hessian estimate]
    \label{lem:ABF}
    Let $u_{\varepsilon} : \Omega_1\rightarrow \R$ and $v_{\varepsilon} : \Omega_2\rightarrow \R$ be Schr\"odinger potentials satisfying \eqref{schrod:2} and strictly convex functions $u_0 : \Omega_{1}\rightarrow \R$, $v_{0} : \Omega_{2}\rightarrow \R$ be Kantorovich potentials given in \eqref{Kant_poten}. Then there exist a constant $m\equiv m(n,\lambda,\Lambda)\in (0,1)$ such that  for every convex compact subset $\Omega_{1}'\ssubset \Omega_{1}$, the following estimates 
    \begin{align}
        \label{lem:ABF:2}
        \norm{\nabla^2u_{\varepsilon}}_{L^{\infty}(\Omega_1')} \leqslant \frac{M}{\varepsilon^{m}}
    \end{align}
    hold for a constant $M$ depending only on $n,\lambda$, $\Lambda$, $\Omega_{1}'$ and $\Omega_{1}$. 
\end{lemma}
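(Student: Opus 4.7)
The representation formula \eqref{eq:2nd:1} writes $\nabla^2 u_\varepsilon(x)$ as $\varepsilon^{-1}$ times the covariance of $y$ under the probability measure $\rho_{x,\varepsilon}(y)\,d\nu(y)$, where $\rho_{x,\varepsilon}(y):=\exp((\langle x,y\rangle-u_\varepsilon(x)-v_\varepsilon(y))/\varepsilon)$ is a density by the first equation of \eqref{schrod:2}. The trivial estimate $\|\nabla^2u_\varepsilon\|_\infty\lesssim\diam(\Omega_2)^2/\varepsilon$ is too crude; I would improve it by exploiting that $\rho_{x,\varepsilon}\,d\nu$ concentrates around $y=\nabla u_0(x)$ at rate $\varepsilon^{a/p}$, precisely in the sense already used in Lemma \ref{lem:1E}.

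First I would replace $\nabla u_\varepsilon(x)$ by $\nabla u_0(x)$ inside the quadratic weight: the elementary inequality $|y-\nabla u_\varepsilon(x)|^2 \leqslant 2|y-\nabla u_0(x)|^2 + 2|\nabla u_\varepsilon(x)-\nabla u_0(x)|^2$ combined with Lemma \ref{lem:1E} produces an additive contribution $2B^2\varepsilon^{2b-1}$ (since $\int\rho_{x,\varepsilon}\,d\nu=1$). After the standard normalization $(u_\varepsilon-d,\,v_\varepsilon+d)$ that makes $u_\varepsilon-u_0$ vanish somewhere on a suitable enlargement of $\Omega_1'$, it suffices to bound
\begin{equation*}
\frac{2}{\varepsilon}\int_{\Omega_2}|y-\nabla u_0(x)|^2\,\rho_{x,\varepsilon}(y)\,d\nu(y),\qquad x\in\Omega_1',
\end{equation*}
and one can then invoke the $L^\infty$ estimates \eqref{lem:ABF:0}--\eqref{lem:ABF:3} from Lemma \ref{lem:0E}.

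The strategy now parallels Lemma \ref{lem:1E}. Fix convex compacts $\Omega_2':=\nabla u_0(\Omega_1')\ssubset\Omega_2''\ssubset\Omega_2$ and split the integral over $\Omega_2''$ and over $\Omega_2\setminus\Omega_2''$: the tail is controlled by the verbatim chain \eqref{ABF:s3:10}--\eqref{ABF:s3:12}, as the extra polynomial factor $|y-\nabla u_0(x)|$ in the integrand (compared with Lemma \ref{lem:1E}) does not destroy the super-polynomial decay granted by the choice of $\Omega_2''$. On $\Omega_2''$, split further at the threshold $|y-\nabla u_0(x)|=M_0\varepsilon^{a/p}$: the near region contributes at most $M_0^2\varepsilon^{2a/p}$, because the integrand is bounded pointwise by $M_0^2\varepsilon^{2a/p}$ times a probability density. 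On the far region, Lemma \ref{lem:Detach_local}(ii) applied to $(u_0,v_0)$ together with \eqref{lem:ABF:0}--\eqref{lem:ABF:3} yields, after choosing $M_0$ so that $LM_0^p\geqslant 2(A_1+A_2)$,
\begin{equation*}
\rho_{x,\varepsilon}(y)\leqslant\exp\!\left(-\frac{L}{2\varepsilon}|y-\nabla u_0(x)|^p\right),
\end{equation*}
and integrating $|y-\nabla u_0(x)|^2$ against this super-Gaussian tail starting from $|y-\nabla u_0(x)|\geqslant M_0\varepsilon^{a/p}$ produces a contribution that is super-polynomially small in $\varepsilon$.

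Putting the three contributions together one obtains
\begin{equation*}
\|\nabla^2 u_\varepsilon\|_{L^\infty(\Omega_1')}\leqslant\frac{M}{\varepsilon^m},\qquad m:=1-\min\!\bigl(2a/p,\,2b\bigr)\in(0,1),
\end{equation*}
which is the stated estimate; the fact that $m<1$ uses $a\in(0,1)$ and $p\geqslant 2$, so that $a/p<1/2$, together with $b>0$ from Lemma \ref{lem:1E}. The main technical point, already encountered in Lemma \ref{lem:1E}, is that the residual factor $e^{(A_1+A_2)\varepsilon^{a-1}}$ produced by Lemma \ref{lem:0E} forces one to discard an $\varepsilon^{a/p}$-collar around $\nabla u_0(x)$, and this is exactly what degrades the naive exponent $-1$ to $-m$; the novelty here is merely that the quadratic integrand contributes $\varepsilon^{2a/p}$ instead of $\varepsilon^{a/p}$, and this extra power of $\varepsilon$ is precisely what permits $m<1$.
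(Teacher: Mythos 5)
Your proposal is correct and follows essentially the same route as the paper: the representation formula \eqref{eq:2nd:1}, the splitting $|y-\nabla u_{\varepsilon}(x)|^2\leqslant 2|y-\nabla u_{0}(x)|^2+2|\nabla u_{\varepsilon}(x)-\nabla u_{0}(x)|^2$ combined with Lemma \ref{lem:1E}, and then the same near/far decomposition around $\nabla u_{0}(x)$ as in Lemma \ref{lem:1E} for the remaining integral (a step the paper only sketches with ``arguing similarly''). Your exponent $m=1-\min(2a/p,2b)$ and the accompanying bookkeeping match what the paper's argument yields, up to the harmless remark that $a/p<1/2$ is what gives $m>0$ while $a,b>0$ is what gives $m<1$.
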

\begin{proof}
 The representation formula \eqref{eq:2nd:1} reads as 
\begin{align*}
    %\label{ABF:s4:1}
    \nabla^2u_{\varepsilon}(x) = \frac{1}{\varepsilon}\I_{\Omega_{2}} (y-\nabla u_{\varepsilon}(x))\otimes (y-\nabla u_{\varepsilon}(x))\exp\left( \frac{\inner{x}{y}-u_{\varepsilon}(x)-v_{\varepsilon}(y)}{\varepsilon} \right)\,d\nu(y)\quad \forall x\in\Omega_{1}.
\end{align*}
Therefore, using some elementary inequalities in the last display, for every $x\in \Omega_{1}'$, we see 
\begin{align*}
    %\label{ABF:s4:2}
    |\nabla^2u_{\varepsilon}(x)| 
    &\leqslant
    \frac{2}{\varepsilon} \I_{\Omega_{2}}  \left(|y-\nabla u_{0}(x)|^2 + |\nabla u_{0}(x)-\nabla u_{\varepsilon}(x)|^2\right)\exp\left( \frac{\inner{x}{y}-u_{\varepsilon}(x)-v_{\varepsilon}(y)}{\varepsilon} \right)\,d\nu(y)
    \notag\\&
    \leqslant
     \frac{c}{\varepsilon^{1-2b}} + \frac{2}{\varepsilon}\I_{\Omega_{2}}|y-\nabla u_{0}(x)|^{2} \exp\left( \frac{\inner{x}{y}-u_{\varepsilon}(x)-v_{\varepsilon}(y)}{\varepsilon} \right)\,d\nu(y)
\end{align*}
for a constant $c\equiv c(n,\lambda,\Lambda,\Omega_{1}')$, where we have used the estimate \eqref{lem:ABF:1}. Arguing similarly as in the proof of Lemma \ref{lem:1E} and using \eqref{lem:ABF:1}, we conclude with the inequality \eqref{lem:ABF:2}. 
\end{proof}

\section{Proof of Theorem \ref{thm:main}}

Finally, we provide the proof for the main result of this paper, namely Theorem \ref{thm:main}. Let $\Omega_{1}'\ssubset \Omega_{1}$ be a fixed convex compact subset and let $x,y\in \Omega_{1}'$ be arbitrary points. Let us consider the following two cases.\vspace{2mm}

\noindent
\textbf{Case 1: $|x-y|>\varepsilon$.} By the triangle inequality, Lemma \ref{lem:1E}
 and Theorem \ref{thm:Caf}, we have
\begin{align*}
    %\label{pf:main:1}
    |\nabla u_{\varepsilon}(x)-\nabla u_{\varepsilon}(y)| 
    &\leqslant
    |\nabla u_{\varepsilon}(x)-\nabla u_{0}(x)| + |\nabla u_{0}(x)-\nabla u_{0}(y)| + |\nabla u_{\varepsilon}(y)-\nabla u_{0}(y)|
    \notag\\&
    \leqslant
    2\norm{\nabla u_{\varepsilon}-\nabla u_{0}}_{L^{\infty}(\Omega_{1}')} + |\nabla u_{0}(x)-\nabla u_{0}(y)|
    \notag\\&
    \leqslant
    2B\varepsilon^{b} + c|x-y|^{\alpha}
    \leqslant
    c_{*}|x-y|^{\min\{b,\alpha\}}
\end{align*}
for a constant $c_{*}\equiv c_{*}(n,\lambda,\Lambda,\Omega_{1}', \Omega_{1})$.\vspace{2mm}

\noindent
\textbf{Case 2: $|x-y|\leqslant \varepsilon$.} Using the mean value theorem, the convexity of $\Omega_{1}'$ and Lemma \ref{lem:ABF}, we have
\begin{align*}
    %\label{pf:main:2}
    |\nabla u_{\varepsilon}(x)-\nabla u_{\varepsilon}(y)| 
    &\leqslant
    \norm{\nabla^2u_{\varepsilon}}_{L^{\infty}(\Omega_{1}')}|x-y|
    \notag\\&
    \leqslant
    \frac{c_{*}}{\varepsilon^{m}}|x-y|
    \notag\\&
    \leqslant
    c_{*}|x-y|^{1-m}
\end{align*}
for a constant $c_{*}\equiv c_{*}(n,\lambda,\Lambda,\Omega_{1}', \Omega_{1})$, thereby concluding the proof.

\subsection*{Acknowledgements}
S.B.\ and A.G.\ acknowledge the support of the Canada Research Chairs Program, the Natural Sciences and Engineering Research Council of Canada, and the Applied Mathematics laboratory of the \textit{Centre de recherches math\'ematiques}. A.G. also acknowledges travel support from the European Union's Horizon 2020 research and innovation programme under grant
agreement No 951847. S.B.\ also was supported by the Balzan project led by Luigi Ambrosio.  This project has received funding from Ministero dell'Universit\`a e della Ricerca (PRIN project 202244A7YL of S.D.M. and MIUR Excellence Department Project 2023-2027 awarded to the DIMA of the University of Genova, CUP D33C23001110001), Air Force (AFOSR project FA8655-22-1-7034 of S.D.M.) and Istituto Nazionale di Alta Matematica (S.D.M. is a member of GNAMPA). The final redaction of this paper was performed while A.G. \ was visiting the Institute for Pure and Applied Mathematics (IPAM), which is supported by the National Science Foundation (Grant No. DMS-1925919). S.D.M. wants to thank Guido De Philippis and Felix Otto for useful discussions about the topic of the paper.

\bibliographystyle{abbrv}
\bibliography{refs.bib}

\end{document}